\newcommand{\subj}[1]{\par\noindent{\bf Mathematics Subject Classification 2010: }#1.}
\newcommand{\keyw}[1]{\par\noindent{\bf Keywords: }#1.}
\theoremstyle{definition}
\newtheorem{definition}{Definition}
\newtheorem{theorem}{Theorem}
\theoremstyle{remark}
\newtheorem{remark}{Remark}
\def\a{\alpha}
\def\r{\rho}
\def\t{\tau}
\def\LD{{^CD_{a+}^{\a,\r}}}
\def\RD{{D_{b-}^{\a,\r}}}
\def\RI{{I_{b-}^{1-\a,\r}}}
\begin{document}

\title{Variational Problems Involving a Caputo-Type Fractional Derivative}

\author{Ricardo Almeida\\
{\tt ricardo.almeida@ua.pt}}

\date{Center for Research and Development in Mathematics and Applications (CIDMA)\\
Department of Mathematics, University of Aveiro, 3810--193 Aveiro, Portugal}

\maketitle


\begin{abstract}

We study calculus of variations problems, where the Lagrange function depends on the Caputo-Katugampola fractional derivative. This type of fractional operator is a generalization of the Caputo and the Caputo--Hadamard fractional derivatives, with dependence on a real parameter $\r$. We present sufficient and necessary conditions of first and second order to determine the extremizers of a functional. The cases of integral and holomonic constraints are also considered.

\end{abstract}

\subj{26A33,34A08,34K28}

\keyw{fractional calculus, Caputo--Katugampola fractional derivative, variational problems}


\section{Introduction}

Fractional calculus of variations was first studied by Riewe in \cite{Riewe}, where he showed that nonconservative forces such as friction are modeled by non-integer order derivatives. In fact, although most known methods deal with conservative systems, natural processes are nonconservative and so the usual Lagrange formulation is inadequate to characterize these phenomena.
It turns out that fractional calculus, due to its non-local character, may better describe the behavior of the certain real processes. For this reason nowadays it is an important research area, which has attracted the attention not only of mathematicians, but also of physicists and engineers.
Generally speaking, fractional calculus deals with integrals and derivatives of arbitrary real order, and it was considered since the very beginning of calculus, but only on the past decades it has proven to be applicable in real problems. We can find several definitions for fractional operators, and to decide which one is more efficient to model the problem is a question whose answer depends on the system. Thus, we find several works that deal with similar subjects, for different kinds of fractional operators (see e.g. \cite{Almeida,Atanackovic,Baleanu10,MR2411429,MR2563910,Cresson,El-Nabulsi,Frederico:Torres10,Jumarie,Jumarie3b,comBasia:Frac1,MT,book:MT,withTatiana:Basia} and references therein). In this paper we intend to present a more general theory, that includes the Caputo and the Caputo--Hadamard fractional derivatives, following the work started in \cite{Almeida1}.

The text is organized as follows. In Section \ref{sec:FC} we review the necessary definitions and results for the Caputo--Katugampola fractional derivative.
The main results of the paper are presented in Section \ref{FCV}. First, in \S \ref{sec:fund} we consider the fundamental problem, where we present necessary and sufficient conditions that every minimizer of the functional must fulfill. In \S \ref{sec:legendre} we prove a second order condition to determine if the extremals are in fact minimizers for the functional. Then, we consider variational problems subject to an integral constraint in \S \ref{sec:iso} and to a holomonic constraint in \S \ref{sec:holo}. We end with a generalization of the variational problem, known in the literature as Herglotz problem (\S \ref{sec:herglotz}).

\section{Caputo--Katugampola fractional derivative}\label{sec:FC}

We find several definitions for fractional derivatives, each of them presenting its advantages and disadvantages.
One of those, considered mainly by engineers, is the Caputo derivative exhibiting two important features: the derivative of a constant is zero and the Laplace transform depends only on integer-order derivatives. Given a function $x:[a,b]\to\mathbb R$, the Caputo fractional derivative of $x$ of order $\a\in(0,1)$ is defined by \cite{Kilbas}
$${^CD_{a+}^\a}x(t)=\frac{1}{\Gamma(1-\a)}\frac{d}{dt}\int_a^t\frac{1}{(t-\t)^\a}[x(\t)-x(a)]\,d\t,$$
where $\Gamma$ denotes the well-known Gamma function,
$$\Gamma(z)=\int_0^\infty t^{z-1}\exp(-t)\, dt, \quad z>0,$$
and if $x$ is of class $C^1$, then we have the equivalent form
$${^CD_{a+}^\a}x(t)=\frac{1}{\Gamma(1-\alpha)}\int_a^t \frac{1}{(t-\t)^{\a}}x'(\t) \,d\tau.$$
From the definition, it is clear that if $x$ is a constant function, then ${^CD_{a+}^\a}x(t)=0$ and that, if $x(a)=0$, then the Caputo fractional derivative coincides with the Riemann--Liouville fractional derivative. The Caputo--Hadamard derivative is a very recent concept \cite{Baleanu1,Baleanu2}, and it combines
the Caputo derivative with the Hadamard fractional operator. Given a function $x$, the Caputo--Hadamard fractional derivative of order $\a$ is defined as
$${^HD_{a+}^\a}x(t)=\frac{1}{\Gamma(1-\a)}t\frac{d}{dt}\int_a^t\left(\ln\frac{t}{\t}\right)^{-\a}\frac{x(\t)-x(a)}{\t}d\t,$$
and again if  $x$ is of class $C^1$, then
$${^HD_{a+}^\a}x(t)=\frac{1}{\Gamma(1-\alpha)}\int_a^t\left(\ln\frac{t}{\t}\right)^{-\a}x'(\t)\, d\tau.$$

In \cite{Almeida1}, a new type of operator is presented, that generalizes the two previous operators, by introducing a new parameter $\r>0$ in the definition. The same idea has already been done in \cite{Katugampola1,Katugampola2}, where a new operator is defined which generalizes the Riemann--Liouville and the Hadamard fractional derivatives.

\begin{definition} Let $0<a<b<\infty$, $x:[a,b]\rightarrow\mathbb{R}$ be a function, and $\alpha\in(0,1)$ and $\rho>0$ two fixed reals.
The Caputo--Katugampola fractional derivative of order $\a$ is defined as
$$\LD x(t) =\frac{\r^\a}{\Gamma(1-\alpha)}t^{1-\r}\frac{d}{dt}\int_a^t \frac{\t^{\r-1}}{(t^\r-\t^\r)^\a}[x(\t)-x(a)] \, d\tau.$$
\end{definition}

This was motivated by the recent notion due to Katugampola in \cite{Katugampola1}, where a new fractional integral operator ${I_{a+}^{\a,\r}}x(t)$ is presented,
$${I_{a+}^{\a,\r}}x(t)=\frac{\r^{1-\a}}{\Gamma(\alpha)}\int_a^t \frac{\t^{\r-1}}{(t^\r-\t^\r)^{1-\a}}x(\t) d\tau.$$
When $\a\in\mathbb N$, the fractional integral reduces to a n-fold integral of the form
$$\int_a^t \t_1^{\r-1}\, d\t_1\,\int_a^{\t_1} \t_2^{\r-1}\, d\t_2\,\ldots \int_a^{\t_{n-1}} \t_n^{\r-1}x(\t_n)\, d\t_n.$$

If $x$ is continuously differentiable, then the fractional operator can be written in an equivalent way \cite{Almeida1}:
$$\LD x(t) =\frac{\r^\a}{\Gamma(1-\alpha)}\int_a^t \frac{1}{(t^\r-\t^\r)^\a}x'(\t)\, d\tau,$$
that is, we have
$$\LD x(t) ={I_{a+}^{1-\a,\r}}\left(\t^{1-\r}\frac{d}{d\t}x\right)(t).$$
The main results of \cite{Almeida1} are:
\begin{enumerate}
\item the operator is linear and bounded from $C([a,b])$ to $C([a,b])$,
\item if $x\in C^1[a,b]$, then the map $t\mapsto \LD x(t)$ is continuous in $[a,b]$ and $\LD x(a)=0$,
\item if $x$ is continuous, then $\LD {I_{a+}^{\a,\r}} x(t)=x(t)$,
\item if $x\in C^1[a,b]$, then ${I_{a+}^{\a,\r}}\, \LD x(t)=x(t)-x(a)$.
\end{enumerate}
We remark that, taking $\r=1$, we obtain the Caputo fractional derivative,
$${^CD_{a+}^\a}x(t)={^CD_{a+}^{\a,1}}x(t),$$
 and doing $\r\to0^+$, then we get the Caputo--Hadamard fractional derivative:
$${^HD_{a+}^\a}x(t)={^CD_{a+}^{\a,0^+}}x(t).$$

One crucial result for our present work is an integration by parts formula. For that, we need the two following auxiliary operators, a fractional integral type
$${I_{b-}^{\a,\r}} x(t)=\frac{\r^{1-\a}}{\Gamma(\alpha)}\int_t^b \frac{1}{(\t^\r-t^\r)^{1-\a}}x(\t)\,d\tau,$$
and a fractional differential type
$$\RD x(t)=\frac{\r^\a}{\Gamma(1-\alpha)}\frac{d}{dt}\int_t^b \frac{1}{(\t^\r-t^\r)^\a}x(\t)\,d\tau.$$

\begin{theorem}\label{IntegrationParts} Let $x$ be a continuous function and $y$ be a function of class $C^1$. Then, the following equality holds:
$$\int_a^b x(t) \LD y(t) \, dt=\left[y(t) \RI x(t)\right]_{t=a}^{t=b}-\int_a^b y(t)\RD x(t)  \, dt.$$
\end{theorem}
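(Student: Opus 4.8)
The plan is to reduce the statement to the classical integration by parts formula by unfolding the definition of $\LD y$ and interchanging the order of integration. Since $y$ is of class $C^1$, I would begin by replacing $\LD y$ with its equivalent representation stated above, writing
$$\int_a^b x(t)\,\LD y(t)\,dt=\frac{\r^\a}{\Gamma(1-\a)}\int_a^b x(t)\left(\int_a^t\frac{y'(\t)}{(t^\r-\t^\r)^\a}\,d\t\right)dt.$$
This is an iterated integral over the triangular region $\{(t,\t):a\le\t\le t\le b\}$. The first main step is to apply Fubini's theorem to swap the order of integration, obtaining
$$\frac{\r^\a}{\Gamma(1-\a)}\int_a^b y'(\t)\left(\int_\t^b\frac{x(t)}{(t^\r-\t^\r)^\a}\,dt\right)d\t.$$

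Next I would recognize the inner integral. Comparing with the definition of the right operator $\RI$, one sees directly that $\DS\frac{\r^\a}{\Gamma(1-\a)}\int_\t^b\frac{x(t)}{(t^\r-\t^\r)^\a}\,dt=\RI x(\t)$, so the double integral collapses to $\int_a^b y'(\t)\,\RI x(\t)\,d\t$. At this point the problem is purely classical: I would apply the ordinary integration by parts formula to the product $y'\cdot\RI x$, which yields the boundary term $\left[y(t)\,\RI x(t)\right]_{t=a}^{t=b}$ minus $\int_a^b y(t)\,\frac{d}{dt}\RI x(t)\,dt$.

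To finish, I would use the key observation that $\RD$ is exactly the derivative of $\RI$: comparing their definitions shows that the kernel and the constant $\r^\a/\Gamma(1-\a)$ coincide, so that $\RD x(t)=\frac{d}{dt}\RI x(t)$. Substituting this identity into the last integral produces precisely the right-hand side of the claimed formula.

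The main obstacle is the justification of Fubini's theorem, because the kernel $(t^\r-\t^\r)^{-\a}$ is singular along the diagonal $t=\t$. I expect to control this by noting that near $\t=t$ one has $t^\r-\t^\r\sim\r\,\t^{\r-1}(t-\t)$, so the singularity is of order $(t-\t)^{-\a}$ with $\a\in(0,1)$, hence integrable; together with the boundedness of the continuous functions $x$ and $y'$ on $[a,b]$ this gives absolute integrability of the double integral and legitimizes the interchange. A secondary point is that the final integration by parts requires $\RI x$ to be absolutely continuous with derivative $\RD x$; this is guaranteed for continuous $x$ by the very definition of $\RD$ as $\frac{d}{dt}\RI x$, so no extra regularity on $x$ is needed.
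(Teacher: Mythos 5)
Your proposal is correct and follows essentially the same route as the paper: unfold the definition of $\LD y$ for $C^1$ functions, interchange the order of integration over the triangle (the paper invokes Dirichlet's formula where you invoke Fubini), recognize the inner integral as $\RI x$, and conclude by classical integration by parts using $\RD x=\frac{d}{dt}\RI x$. Your added justification of the interchange via the integrable singularity of the kernel is a welcome detail the paper omits.
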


\begin{proof}
Starting with the definition, we have
$$\int_a^b x(t) \LD y(t) \, dt=\frac{\r^\a}{\Gamma(1-\alpha)}\int_a^b\int_a^t x(t) \frac{1}{(t^\r-\t^\r)^\a}y'(\t)\, d\tau\,dt.$$
Using the Dirichlet's formula, we get
$$\int_a^b\int_a^t x(t) \frac{1}{(t^\r-\t^\r)^\a}y'(\t)\, d\tau\,dt=\int_a^b\int_t^b x(\t) \frac{1}{(\t^\r-t^\r)^\a}y'(t)\, d\tau\,dt.$$
Integrating by parts, considering
$$u'(t)=y'(t) \quad \mbox{and} \quad v(t)=\int_t^b x(\t) \frac{1}{(\t^\r-t^\r)^\a}\, d\tau,$$
we obtain the desired formula.
\end{proof}

\section{The variational problem}\label{FCV}

Fractional calculus of variations appeared in 1996, with the work ok Riewe \cite{Riewe}, since as he explained, "Traditional Lagrangian and Hamiltonian mechanics cannot be used with nonconservative forces such as friction". Since then, several studies have appeared, for different types of fractional derivatives and/or fractional integrals, namely to  determine necessary and sufficient conditions that any extremal for the variational functional must satisfy.

To start, we present the concept of minimizer for a given functional. On the space $C^1[a,b]$, consider the norm $\|\cdot\|$ given by
$$\|x\| =\max_{t\in[a,b]}|x(t)|+ \max_{t\in[a,b]}\left|\LD x(t)\right|.$$
Let $D\subseteq C^1[a,b]$ be a nonempty set and $J$ a functional defined on $D$. We say that $x$ is a local minimizer of $J$ in the set $D$ if there exists a
neighborhood $V_\delta(x)$ such that for all $x^* \in V_\delta(x)\cap D$, we have $J(x)\leq J(x^*)$. Note that any function  $x^*\in V_\delta(x)\cap D$ can be represent in the form $x^*=x+\epsilon h$, where $|\epsilon| \ll 1$ and $h$ is such that $x+\epsilon h\in D$.

The purpose of this work is to study fractional calculus of variations problems, where the integral functional depends on the Caputo--Katugampola fractional derivative. Given $x\in C^1[a,b]$, consider the functional
\begin{equation}\label{functional}J(x)=\int_a^b L(t,x(t),\LD x(t))\,dt,\end{equation}
with the following assumptions:
\begin{enumerate}
\item $L:[a,b]\times \mathbb R^2\to\mathbb R$ is continuously differentiable with respect to the second and third arguments;
\item given any function $x$, the map $t\mapsto \RD(\partial_3L (t,x(t),\LD x(t)))$ is continuous.
\end{enumerate}
Here, and along the work, given a function with several independent variables $f:A\subseteq \mathbb R^n\to\mathbb R$, we denote
$$\partial_i f(x_1,\ldots,x_n):=\frac{\partial f}{\partial x_i}(x_1,\ldots,x_n).$$

We remark that $x$ may be fixed or free at $t=a$ and $t=b$. Both cases will be considered later.

\subsection{The fundamental problem}\label{sec:fund}

We seek necessary and sufficient conditions that each extremizers of the functional must fulfill. In order to obtain such equations we consider variations of the solutions and use the fact that the first variation of the functional must vanish at the minimizer.

\begin{theorem} Let $x$ be a minimizer of the functional $J$ as in \eqref{functional}, defined on the subspace
$$U=\left\{ x\in C^1[a,b]\, : \, x(a)=x_a \quad \mbox{and} \quad x(b)=x_b \right\},$$
where $x_a,x_b\in\mathbb R$ are fixed. Then, $x$ is a solution for the fractional differential equation
\begin{equation}\label{ELequation}\partial_2L(t,x(t),\LD x(t))-\RD \left(\partial_3L(t,x(t),\LD x(t))\right)=0\end{equation}
on $[a,b]$.
\end{theorem}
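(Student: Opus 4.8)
The plan is to use the standard variational argument: since $x$ is a minimizer in the class $U$ of functions with fixed endpoints, any admissible variation must keep the endpoints fixed, so I consider perturbations of the form $x^*=x+\epsilon h$ where $h\in C^1[a,b]$ satisfies $h(a)=h(b)=0$. Then $x^*\in U$ for every $\epsilon$, and the real-valued function $\epsilon\mapsto J(x+\epsilon h)$ attains a local minimum at $\epsilon=0$. The first step is therefore to write
$$
j(\epsilon)=\int_a^b L\bigl(t,x(t)+\epsilon h(t),\LD x(t)+\epsilon\LD h(t)\bigr)\,dt,
$$
using the linearity of the operator $\LD$ to conclude $\LD(x+\epsilon h)=\LD x+\epsilon\LD h$, and to observe that $j'(0)=0$ is a necessary condition for a minimum.

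Next I would differentiate under the integral sign. The differentiability of $L$ in its second and third arguments (assumption 1) justifies applying the chain rule, giving
$$
j'(0)=\int_a^b\Bigl[\partial_2L\bigl(t,x(t),\LD x(t)\bigr)\,h(t)+\partial_3L\bigl(t,x(t),\LD x(t)\bigr)\,\LD h(t)\Bigr]\,dt=0.
$$
The key technical step is to remove the fractional derivative from $h$ in the second term. For this I would invoke the integration by parts formula of Theorem \ref{IntegrationParts}, applied with $x$ replaced by $\partial_3L(t,x(t),\LD x(t))$ and $y$ replaced by $h$; assumption 2 guarantees that $\RD(\partial_3L)$ is continuous so the formula is legitimate. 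Since $h(a)=h(b)=0$, the boundary term $\left[h(t)\,\RI\bigl(\partial_3L\bigr)\right]_{t=a}^{t=b}$ vanishes, leaving
$$
j'(0)=\int_a^b\Bigl[\partial_2L\bigl(t,x(t),\LD x(t)\bigr)-\RD\bigl(\partial_3L\bigl(t,x(t),\LD x(t)\bigr)\bigr)\Bigr]h(t)\,dt=0.
$$

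Finally, since this holds for every admissible $h\in C^1[a,b]$ vanishing at both endpoints, I would apply the fundamental lemma of the calculus of variations to conclude that the bracketed factor must vanish identically on $[a,b]$, which is exactly equation \eqref{ELequation}. The main obstacle I anticipate is not conceptual but one of rigor: justifying the interchange of differentiation and integration in computing $j'(0)$, and ensuring the hypotheses on $L$ are precisely strong enough to apply both the dominated-convergence-type argument for differentiating under the integral and the integration by parts theorem. Assumption 2 is evidently tailored to supply exactly the continuity of $\RD(\partial_3L)$ needed for the latter, so the proof should go through cleanly once these regularity points are addressed.
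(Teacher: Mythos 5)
Your proposal is correct and follows essentially the same route as the paper's own proof: fixed-endpoint variations $x+\epsilon h$, the first-variation condition $j'(0)=0$, integration by parts via Theorem \ref{IntegrationParts}, vanishing of the boundary term, and the fundamental lemma. The only difference is that you spell out the intermediate step $j'(0)=\int_a^b[\partial_2L\,h+\partial_3L\,\LD h]\,dt$ and name the fundamental lemma explicitly, which the paper leaves implicit.
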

\begin{proof}
Let $x+\epsilon h$ be a variation of $x$, with $|\epsilon| \ll 1$ and $h\in C^1[a,b]$. Since $x+\epsilon h$ must belong to the set $U$, the boundary conditions $h(a)=0=h(b)$ must hold. Define the function $j$ in a neighborhood of zero as
$$j(\epsilon)=J(x+\epsilon h).$$
Since $x$ is a minimizer of $J$, then $\epsilon=0$ is a minimizer of $j$ and so $j'(0)=0$. Computing $j'(0)$ and using Theorem \ref{IntegrationParts}, we get
$$\int_a^b\left[\partial_2L(t,x(t),\LD x(t))-\RD \left(\partial_3L(t,x(t),\LD x(t))\right)\right] h(t)\,dt$$
\begin{equation}\label{aux1}+\left[h(t) \RI \left(\partial_3L(t,x(t),\LD x(t))\right)\right]_{t=a}^{t=b}=0.\end{equation}
Since $h(a)=0=h(b)$ and $h$ is arbitrary elsewhere, we conclude that
$$\partial_2L(t,x(t),\LD x(t))-\RD \left(\partial_3L(t,x(t),\LD x(t))\right)=0, \quad \forall t\in[a,b].$$
\end{proof}

\begin{definition} Eq. \eqref{ELequation} is called the Euler--Lagrange equation associated to functional $J$. The solutions of this fractional differential equation are called extremals of $J$.
\end{definition}

\begin{remark} The case of several dependent variables is similar, and the Euler--Lagrange equations are easily deduced. Let
$$J(\overline x)=\int_a^b L(t,\overline x(t),\LD \overline x(t))\,dt,$$
where
\begin{enumerate}
\item $\overline x(t)=(x_1(t),\ldots,x_n(t))$ and  $\LD\overline x(t)=(\LD x_1(t),\ldots,\LD x_n(t))$;
\item $L:[a,b]\times \mathbb R^{2n}\to\mathbb R$ is continuously differentiable with respect to its $i$th argument, for all $i\in\{2,\ldots, 2n+1\}$;
\item given any function $x$, the maps $t\mapsto \RD(\partial_{n+i}L (t,x(t),\LD x(t)))$ is continuous, for all $i\in\{2,\ldots, n+1\}$.
\end{enumerate}
In this case, if $x$ is a minimizer of the functional $J$, subject to the restrictions $\overline x(a)= \overline x_a$ and $\overline x(b)= \overline x_b$,
where $ \overline x_a,\overline x_b\in\mathbb R^n$ are fixed, then $x$ is a solution of
$$\partial_{i}L(t,x(t),\LD x(t))-\RD \left(\partial_{n+i}L(t,x(t),\LD x(t))\right)=0,$$
for all $i\in\{2,\ldots, n+1\}$.
\end{remark}

We remark that Eq. \eqref{ELequation} gives only a necessary condition. To deduce a sufficient condition, we recall the notion of convex function.
Given a function $L(\underline t,x,y)$  continuously differentiable with respect to the second and third arguments, we say that $L$ is convex in $S\subseteq\mathbb R^3$ if condition
$$L(t,x+x_1,y+y_1)-L(t,x,y)\geq\partial_2 L(t,x,y)x_1+\partial_3 L(t,x,y)y_1$$
holds for all $(t,x,y),(t,x+x_1,y+y_1)\in S$.

\begin{theorem}\label{Teo:suffi} If the function $L$ as in \eqref{functional} is convex in $[a,b]\times\mathbb R^2$, then each solution of the fractional Euler--Lagrange equation \eqref{ELequation} minimizes $J$ in $U$.
\end{theorem}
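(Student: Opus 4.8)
The plan is to exploit convexity to bound the increment $J(x^*) - J(x)$ from below, and then show that this lower bound is exactly zero whenever $x$ satisfies the Euler--Lagrange equation. Let $x$ be a solution of \eqref{ELequation} in $U$, and let $x^* = x + \epsilon h$ be any competitor in $U$, so that $h(a) = 0 = h(b)$. First I would write out the difference explicitly as
$$J(x^*) - J(x) = \int_a^b \left[ L(t, x^*(t), \LD x^*(t)) - L(t, x(t), \LD x(t)) \right] dt,$$
and apply the convexity inequality pointwise, with the increments $x_1 = x^*(t) - x(t) = \epsilon h(t)$ and $y_1 = \LD x^*(t) - \LD x(t) = \epsilon \LD h(t)$ (the latter using linearity of the operator $\LD$, which is one of the listed main results from \cite{Almeida1}).

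This yields the estimate
$$J(x^*) - J(x) \geq \int_a^b \left[ \partial_2 L(t,x(t),\LD x(t)) \, \epsilon h(t) + \partial_3 L(t,x(t),\LD x(t)) \, \epsilon \LD h(t) \right] dt.$$
The next step is to handle the second term in the integrand using the integration by parts formula of Theorem \ref{IntegrationParts}, applied with the continuous function $\partial_3 L(t,x(t),\LD x(t))$ in the role of $x$ and $h$ in the role of $y$. This transfers the fractional derivative off of $h$, producing a term $-\int_a^b h(t)\,\RD(\partial_3 L(t,x(t),\LD x(t)))\,dt$ together with a boundary contribution $\left[ h(t)\, \RI(\partial_3 L(t,x(t),\LD x(t))) \right]_{t=a}^{t=b}$, which vanishes because $h(a) = 0 = h(b)$.

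After this manipulation, the lower bound becomes
$$J(x^*) - J(x) \geq \epsilon \int_a^b \left[ \partial_2 L(t,x(t),\LD x(t)) - \RD\left(\partial_3 L(t,x(t),\LD x(t))\right) \right] h(t)\, dt,$$
and the bracketed factor is identically zero precisely because $x$ solves the Euler--Lagrange equation \eqref{ELequation}. Hence $J(x^*) - J(x) \geq 0$, which is exactly the statement that $x$ minimizes $J$ in $U$. I expect the main subtlety to lie in verifying that the hypotheses of Theorem \ref{IntegrationParts} are genuinely met here: one needs $\partial_3 L(t,x(t),\LD x(t))$ to be continuous (guaranteed by the differentiability assumption on $L$ together with continuity of $t \mapsto \LD x(t)$ for $x \in C^1$) and one needs $h \in C^1[a,b]$ as required for the $C^1$ argument in the formula. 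The convexity step itself is routine once the correct increments are identified, but care must be taken that the inequality is applied with the base point $(t, x(t), \LD x(t))$ and the perturbed point $(t, x^*(t), \LD x^*(t))$ both lying in $[a,b] \times \mathbb{R}^2$, which holds trivially since convexity is assumed on the whole space.
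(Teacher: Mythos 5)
Your proposal is correct and follows essentially the same route as the paper: apply the convexity inequality pointwise to bound $J(x+\epsilon h)-J(x)$ from below, integrate by parts via Theorem \ref{IntegrationParts} (with the boundary term vanishing since $h(a)=0=h(b)$), and conclude the lower bound is zero from the Euler--Lagrange equation. Your version is in fact slightly more explicit than the paper's, which performs the integration by parts silently in a single chain of (in)equalities.
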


\begin{proof} Let $x$ be a solution of Eq. \eqref{ELequation}  and $x+\epsilon h$ be a variation of $x$, with $|\epsilon| \ll 1$ and $h\in C^1[a,b]$ with $h(a)=0=h(b)$. Then
$$\begin{array}{ll}
J(x+\epsilon h)-J(x)&= \displaystyle\int_a^b \left[L(t,x(t)+\epsilon h(t),\LD x(t)+\epsilon \LD h(t))-L(t,x(t),\LD x(t))\right] \, dt \\
& \geq \displaystyle \int_a^b \left[ \partial_2 L(t,x(t),\LD x(t))\epsilon h(t)+ \partial_3 L(t,x(t),\LD x(t))\epsilon \LD h(h) \right] \, dt\\
&=  \displaystyle\int_a^b \left[ \partial_2 L(t,x(t),\LD x(t)) -\RD ( \partial_3 L(t,x(t),\LD x(t)))\right] \epsilon h(t)\,dt\\
&=0
\end{array}$$
since $x$ is a solution of (\ref{ELequation}). Therefore, $x$ is a local minimizer of $J$.
\end{proof}

If we do not impose any restrictions on the boundaries, we obtain two transversality conditions.

\begin{theorem} Let $x$ be a minimizer of the functional $J$ as in \eqref{functional}. Then, $x$ is a solution for the fractional differential equation
$$\partial_2L(t,x(t),\LD x(t))-\RD \left(\partial_3L(t,x(t),\LD x(t))\right)=0, \quad t\in[a,b].$$
If $x(a)$ is free, then
$$\RI \left(\partial_3L(t,x(t),\LD x(t))\right) \quad \mbox{at} \quad t=a.$$
If $x(b)$ is free, then
$$\RI \left(\partial_3L(t,x(t),\LD x(t))\right) \quad \mbox{at} \quad t=b.$$
\end{theorem}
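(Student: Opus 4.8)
The plan is to mirror the proof of the fundamental problem, the only change being that the admissible variations $h\in C^1[a,b]$ are no longer required to vanish at the endpoints. I would set $j(\epsilon)=J(x+\epsilon h)$, use that $j'(0)=0$ since $\epsilon=0$ minimizes $j$, and apply the integration by parts formula of Theorem \ref{IntegrationParts} to the term containing $\LD h$. This produces exactly the identity \eqref{aux1}, namely
$$\int_a^b\left[\partial_2L(t,x(t),\LD x(t))-\RD\left(\partial_3L(t,x(t),\LD x(t))\right)\right]h(t)\,dt+\left[h(t)\RI\left(\partial_3L(t,x(t),\LD x(t))\right)\right]_{t=a}^{t=b}=0,$$
now valid for every $h\in C^1[a,b]$ without boundary constraints.

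The argument then splits into two stages. First I would restrict attention to variations satisfying $h(a)=0=h(b)$. For these the boundary term drops out, and \eqref{aux1} reduces to the situation of the fundamental problem; the fundamental lemma of the calculus of variations yields the Euler--Lagrange equation \eqref{ELequation} on all of $[a,b]$. Having secured \eqref{ELequation}, I would return to the general case: the integral term in \eqref{aux1} now vanishes identically for \emph{every} $h$, leaving only
$$h(b)\,\RI\left(\partial_3L(t,x(t),\LD x(t))\right)\Big|_{t=b}-h(a)\,\RI\left(\partial_3L(t,x(t),\LD x(t))\right)\Big|_{t=a}=0.$$

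To extract the transversality conditions I would exploit the independence of the endpoint values of $h$. If $x(a)$ is free, choosing a variation with $h(a)\neq 0$ and $h(b)=0$ forces $\RI(\partial_3L)=0$ at $t=a$; symmetrically, if $x(b)$ is free, a variation with $h(b)\neq 0$ and $h(a)=0$ forces $\RI(\partial_3L)=0$ at $t=b$. When an endpoint value of $x$ is instead prescribed, the corresponding $h$ must vanish there and no condition arises, which is consistent with the fundamental problem.

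The proof is largely routine given Theorem \ref{IntegrationParts}, and the one point demanding care is the order of the two stages: one cannot read off both the Euler--Lagrange equation and the boundary conditions from \eqref{aux1} simultaneously, since the interior and boundary contributions are coupled. Decoupling them by first using variations vanishing at the endpoints, and only afterwards general ones, is essential. A secondary technical check is that $\RI(\partial_3L)$ is well defined as a one-sided limit at $t=a$ and $t=b$, which is guaranteed by the continuity hypotheses placed on $L$ and on the map $t\mapsto\RD(\partial_3L)$.
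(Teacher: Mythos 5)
Your proposal is correct and follows essentially the same route as the paper: first derive identity \eqref{aux1} for arbitrary $h\in C^1[a,b]$, obtain the Euler--Lagrange equation from variations vanishing at the endpoints, and then use the surviving boundary term with $h(a)\neq 0$, $h(b)=0$ (and symmetrically) to extract the transversality conditions. Your explicit remark about the need to decouple the interior and boundary contributions in two stages is exactly the step the paper performs implicitly when it asserts the Euler--Lagrange equation before analyzing the boundary term.
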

\begin{proof}
Since $x$ is a minimizer, then
$$\partial_2L(t,x(t),\LD x(t))-\RD \left(\partial_3L(t,x(t),\LD x(t))\right)=0,$$
for all $t\in[a,b]$. Therefore, using Eq. \eqref{aux1}, we obtain
$$\left[h(t) \RI \left(\partial_3L(t,x(t),\LD x(t))\right)\right]_{t=a}^{t=b}=0.$$
If $x(a)$ is free, then $h(a)$ is also free and taking $h(a)\not=0$ and $h(b)=0$, we get
$$\RI \left(\partial_3L(t,x(t),\LD x(t))\right) \quad \mbox{at} \quad t=a.$$
The second case is similar.
\end{proof}

Observe that in the previous results, the initial point of the cost functional coincides with the initial point of the fractional derivative. Next we consider a more general type of problems, by considering $A\in(a,b)$ and the functional
\begin{equation}\label{funcA}J(x)=\int_A^b L(t,x(t),\LD x(t))\,dt,\end{equation}
with the same assumptions on $L$ as previously, defined on the set
$$U_A=\left\{ x\in C^1[a,b]\, : \, x(A)=x_A \quad \mbox{and} \quad x(b)=x_b \right\}.$$

\begin{theorem} If $x$ is a minimizer of the functional $J$ as in \eqref{funcA}, then $x$ satisfies the fractional differential equations
$${D_{A-}^{\a,\r}}\left(\partial_3L(t,x(t),\LD x(t))\right)-\RD \left(\partial_3L(t,x(t),\LD x(t))\right)=0 \quad \mbox{on} \, [a,A],$$
$$\partial_2L(t,x(t),\LD x(t))-\RD \left(\partial_3L(t,x(t),\LD x(t))\right)=0 \quad \mbox{on} \, [A,b],$$
and the transversality condition
$${I_{A-}^{1-\a,\r}}\left(\partial_3L(t,x(t),\LD x(t))\right)-\RI \left(\partial_3L(t,x(t),\LD x(t))\right)=0 \quad \mbox{at} \quad t=a.$$
\end{theorem}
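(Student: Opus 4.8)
The plan is to mimic the derivation of the fundamental problem, but to keep careful track of the mismatch between the lower limit $A$ of the cost integral in \eqref{funcA} and the lower limit $a$ built into the operator $\LD$. I would take a variation $x+\epsilon h$ with $h\in C^1[a,b]$; since only $x(A)$ and $x(b)$ are prescribed in $U_A$, the admissible variations satisfy $h(A)=0=h(b)$, while $h(a)$ remains \emph{free}. Setting $j(\epsilon)=J(x+\epsilon h)$ and imposing $j'(0)=0$ gives
$$\int_A^b\left[\partial_2L(t,x(t),\LD x(t))\,h(t)+\partial_3L(t,x(t),\LD x(t))\,\LD h(t)\right]dt=0.$$
For brevity write $\psi(t)=\partial_2L(t,x(t),\LD x(t))$ and $\phi(t)=\partial_3L(t,x(t),\LD x(t))$.

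The core of the argument is the term $\int_A^b\phi(t)\,\LD h(t)\,dt$, which cannot be treated by a direct appeal to Theorem \ref{IntegrationParts}, since that formula presupposes that outer and inner integrals share the lower limit $a$. Instead I would unfold $\LD h(t)=\frac{\r^\a}{\Gamma(1-\a)}\int_a^t(t^\r-\t^\r)^{-\a}h'(\t)\,d\t$ and swap the order of integration by Dirichlet's formula. Here lies the main obstacle: the region $\{(t,\t):A\le t\le b,\ a\le\t\le t\}$ does \emph{not} turn into a single product-type domain. For $\t\in[a,A]$ the variable $t$ ranges over all of $[A,b]$, whereas for $\t\in[A,b]$ it ranges only over $[\t,b]$. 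Splitting the first piece via $\int_A^b=\int_\t^b-\int_\t^A$, the inner integrals become $\RI\phi(\t)-{I_{A-}^{1-\a,\r}}\phi(\t)$ on $[a,A]$ and $\RI\phi(\t)$ on $[A,b]$, so that
$$\int_A^b\phi(t)\,\LD h(t)\,dt=\int_a^A h'(\t)\left[\RI\phi(\t)-{I_{A-}^{1-\a,\r}}\phi(\t)\right]d\t+\int_A^b h'(\t)\,\RI\phi(\t)\,d\t.$$

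Next I would integrate by parts on each subinterval, using $\frac{d}{d\t}\RI\phi(\t)=\RD\phi(\t)$ together with the analogous identity $\frac{d}{d\t}{I_{A-}^{1-\a,\r}}\phi(\t)={D_{A-}^{\a,\r}}\phi(\t)$. The boundary contributions at $\t=b$ and $\t=A$ vanish because $h(b)=h(A)=0$ and because ${I_{A-}^{1-\a,\r}}\phi(A)=0$; only the term at $\t=a$, carrying the free factor $h(a)$, survives. Collecting everything, $j'(0)=0$ becomes
$$\int_A^b h(t)\left[\psi(t)-\RD\phi(t)\right]dt+\int_a^A h(t)\left[{D_{A-}^{\a,\r}}\phi(t)-\RD\phi(t)\right]dt-h(a)\left[\RI\phi(a)-{I_{A-}^{1-\a,\r}}\phi(a)\right]=0.$$

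Finally I would exploit the independence of $h$ on the two open subintervals together with the freedom of $h(a)$. Taking $h$ supported in $(A,b)$ and applying the fundamental lemma yields $\psi-\RD\phi=0$ on $[A,b]$, the second stated equation; taking $h$ supported in $(a,A)$ yields ${D_{A-}^{\a,\r}}\phi-\RD\phi=0$ on $[a,A]$, the first stated equation; and with both integrands annihilated, the residual boundary term and the arbitrariness of $h(a)$ force $\RI\phi(a)-{I_{A-}^{1-\a,\r}}\phi(a)=0$, which is the claimed transversality condition ${I_{A-}^{1-\a,\r}}(\partial_3L)-\RI(\partial_3L)=0$ at $t=a$ up to sign. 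Everything after the Dirichlet splitting is routine bookkeeping already present in the fundamental problem; the genuinely delicate point is the two-part decomposition of the swapped domain, which is exactly what produces the extra equation on $[a,A]$.
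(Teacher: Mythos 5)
Your argument is correct and arrives at the paper's proof by essentially the same route: the paper splits the outer integral as $\int_A^b=\int_a^b-\int_a^A$ and applies Theorem \ref{IntegrationParts} (together with its analogue on $[a,A]$, which produces ${D_{A-}^{\a,\r}}$ and ${I_{A-}^{1-\a,\r}}$) to each piece, whereas you perform the Dirichlet swap first and then split the resulting domain --- the same computation in a different order, yielding identical intermediate expressions. All three necessary conditions follow as in the paper, and the sign reversal in your boundary term is immaterial since it is equated to zero.
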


\begin{proof}
Let $x+\epsilon h$ be a variation of $x$, with $|\epsilon| \ll 1$, and $h\in C^1[a,b]$ with $h(A)=0=h(b)$. Computing the first variation of $J$, we deduce the following
$$\begin{array}{ll}
0&=\displaystyle \int_A^b\left[\partial_2L(t,x(t),\LD x(t))h(t)+\partial_3L(t,x(t),\LD x(t))\LD h(t)\right]\,dt\\
&=\displaystyle \int_a^b\left[\partial_2L(t,x(t),\LD x(t))h(t)+\partial_3L(t,x(t),\LD x(t))\LD h(t)\right]\,dt\\
&\displaystyle\quad -\int_a^A\left[\partial_2L(t,x(t),\LD x(t))h(t)+\partial_3L(t,x(t),\LD x(t))\LD h(t)\right]\,dt.
 \end{array}$$
Integrating by parts, and using the fact that $h(A)=0=h(b)$, we arrive at
$$\int_a^A\left[{D_{A-}^{\a,\r}}\left(\partial_3L(t,x(t),\LD x(t))\right)-\RD \left(\partial_3L(t,x(t),\LD x(t))\right)\right] h(t)\,dt$$
$$+ \int_A^b\left[\partial_2L(t,x(t),\LD x(t))-\RD \left(\partial_3L(t,x(t),\LD x(t))\right)\right] h(t)\,dt$$
$$+h(a)\left[{I_{A-}^{1-\a,\r}}\left(\partial_3L(t,x(t),\LD x(t))\right)-\RI \left(\partial_3L(t,x(t),\LD x(t))\right)\right]^{t=a}=0.$$
Since $h$ is an arbitrary function, we obtain the three necessary conditions.
\end{proof}

One interesting question is to determine the best type of fractional derivative for which the functional attains the minimum possible value. The Caputo--Katugampola fractional derivative depends on an extra parameter $\r$, and we can obtain e.g. the Caputo and the Caputo--Hadamard fractional derivatives when $\r=1$ and $\r\to0^+$, respectively. Thus, we are interested now to determine not only the minimizer $x$ but also the value of $\r$ for which $J$ attains its minimum value.

\begin{theorem} Let $(x,\r)$ be a minimizer of the functional $J_\r$ given by
$$J_\r(x,\r)=\int_a^b L(t,x(t),\LD x(t))\,dt,$$
defined on the subspace $U\times\mathbb R^+$, where
$$U=\left\{ x\in C^1[a,b]\, : \, x(a)=x_a \quad \mbox{and} \quad x(b)=x_b \right\}.$$
Then, $(x,\r)$ is a solution for the fractional differential equation
$$\partial_2L(t,x(t),\LD x(t))-\RD \left(\partial_3L(t,x(t),\LD x(t))\right)=0$$
on $[a,b]$, and satisfies the integral equation
$$\int_a^b \partial_3L(t,x(t),\LD x(t)) \psi'(\r)\,dt=0,$$
where $\psi(\r)=\LD x(t)$.
\end{theorem}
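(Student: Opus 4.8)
The plan is to use that a minimizer of a functional of two independent arguments is, in particular, a minimizer with respect to each argument separately; I would therefore extract the two stated necessary conditions by perturbing the trajectory $x$ and the parameter $\r$ independently. First, fixing $\r$ and varying only the trajectory, take $x+\epsilon h$ with $h\in C^1[a,b]$ satisfying $h(a)=0=h(b)$, so that the perturbed function stays in $U$. Since $(x,\r)$ minimizes $J_\r$ on $U\times\mathbb R^+$, in particular $x$ minimizes $x\mapsto J_\r(x,\r)$ over $U$, which is exactly the fundamental problem of the first theorem. Repeating that argument — setting $j(\epsilon)=J_\r(x+\epsilon h,\r)$, using $j'(0)=0$, and applying the integration-by-parts formula of Theorem \ref{IntegrationParts} together with $h(a)=0=h(b)$ — yields the Euler--Lagrange equation \eqref{ELequation} on $[a,b]$.

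Next, fixing the trajectory $x$ and perturbing the parameter, I would study $g(\epsilon)=J_\r(x,\r+\epsilon)$ for $|\epsilon|\ll1$. Because $\r$ ranges over the open set $\mathbb R^+$ and $(x,\r)$ is a minimizer, $\epsilon=0$ is an interior minimizer of $g$, and hence $g'(0)=0$. Writing $\psi(\r)=\LD x(t)={^CD_{a+}^{\a,\r}}x(t)$ for the dependence of the fractional derivative on the parameter, with $t$ and $x$ held fixed, and noting that only the third argument of $L$ depends on $\r$, differentiation under the integral sign and the chain rule give
$$g'(0)=\int_a^b\partial_3L(t,x(t),\LD x(t))\,\psi'(\r)\,dt=0,$$
which is precisely the required integral equation.

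The step needing care is the differentiation with respect to $\r$, since two interchanges of differentiation and integration are involved: computing $\psi'(\r)=\frac{\partial}{\partial\r}\,{^CD_{a+}^{\a,\r}}x(t)$ requires differentiating the inner integral defining the fractional derivative, while obtaining $g'(0)$ requires differentiating the outer integral $\int_a^b L\,dt$. Using the representation $\LD x(t)=\frac{\r^\a}{\Gamma(1-\a)}\int_a^t(t^\r-\t^\r)^{-\a}x'(\t)\,d\tau$, valid for $x\in C^1[a,b]$, one differentiates the factor $\r^\a$ and the kernel $(t^\r-\t^\r)^{-\a}$; although the differentiated kernel carries the stronger singularity $(t^\r-\t^\r)^{-\a-1}$ at $\t=t$, it is multiplied by the factor $t^\r\ln t-\t^\r\ln\t$, which vanishes linearly there, so the inner integral still converges and $\psi'(\r)$ is well defined and continuous in $t$. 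This legitimizes both interchanges and places $g'(0)$ in the displayed form; the remainder of the argument is routine and follows from the two independent first-order conditions.
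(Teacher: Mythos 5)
Your proposal is correct and follows essentially the same route as the paper: the paper uses a single one-parameter variation $(x+\epsilon h,\r+\epsilon\r_0)$ of the pair and then specializes to $\r_0=0$ (recovering the Euler--Lagrange equation from the arbitrariness of $h$) and to $\r_0=1$ (recovering the integral condition once the first term vanishes), which is equivalent to your two separate partial variations in $x$ and in $\r$. Your additional paragraph justifying the differentiation of the kernel with respect to $\r$ addresses a point the paper leaves implicit and does not change the argument.
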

\begin{proof}
Let $(x+\epsilon h,\r+\epsilon\r_0)$ be a variation of $(x,\r)$, with $|\epsilon| \ll 1$, $h\in C^1[a,b]$ with  $h(a)=0=h(b)$ and $\r_0\in\mathbb R$. If we define $j$ as
$$j(\epsilon)=J(x+\epsilon h,\r+\epsilon\r_0),$$
then
$$0=j'(0)=\int_a^b\left[\partial_2L(t,x(t),\LD x(t))-\RD \left(\partial_3L(t,x(t),\LD x(t))\right)\right] h(t)\,dt$$
$$+\int_a^b \partial_3L(t,x(t),\LD x(t)) \psi'(\r)\r_0\,dt.$$
If we consider $\r_0=0$, by the arbitrariness of $h$ on $(a,b)$, we conclude that
$$\partial_2L(t,x(t),\LD x(t))-\RD \left(\partial_3L(t,x(t),\LD x(t))\right)=0, \quad \forall t\in[a,b].$$
So,
$$\int_a^b \partial_3L(t,x(t),\LD x(t)) \psi'(\r)\r_0\,dt=0.$$
Taking $\r_0=1$, we get the second necessary condition.
\end{proof}

\subsection{The Legendre condition}\label{sec:legendre}

The Legendre condition is a second-order condition which an extremal must fulfill in order to be a minimizer of the functional. We suggest  \cite{Lazo} where a similar problem is solved for functionals depending on the Riemann--Liouville fractional derivative. To simplify notation, we introduce the following
$$\partial^2_{ij} f(x_1,\ldots,x_n):=\frac{\partial^2f}{\partial x_i\partial x_j}(x_1,\ldots,x_n).$$

\begin{theorem} Let $x$ be a minimizer of the functional $J$ as in \eqref{functional}, defined on the subspace $U$. If $\partial^2_{ij}L $ exists and is continuous for $i,j\in\{2,3\}$, then $x$ satisfies
\begin{equation}\label{2nd_equation}\partial^2_{33}L(t,x(t),\LD x(t))\geq0\end{equation}
on $[a,b]$.
\end{theorem}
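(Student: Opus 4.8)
The plan is to use the standard second-order necessary condition from calculus: if $x$ minimizes $J$ on $U$, then for any admissible variation $x+\epsilon h$ with $h(a)=0=h(b)$, the function $j(\epsilon)=J(x+\epsilon h)$ has a minimum at $\epsilon=0$, so not only $j'(0)=0$ but also $j''(0)\geq 0$. First I would compute $j''(0)$ by differentiating twice under the integral sign, which, using the chain rule on $L(t,x+\epsilon h,\LD x+\epsilon\LD h)$, yields the quadratic form
$$j''(0)=\int_a^b\left[\partial^2_{22}L\,h^2+2\partial^2_{23}L\,h\,\LD h+\partial^2_{33}L\,(\LD h)^2\right]dt\geq0,$$
where all partials are evaluated at $(t,x(t),\LD x(t))$. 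So the inequality I must establish, $\partial^2_{33}L\geq0$ pointwise, is the statement that the coefficient of the highest-order term in this quadratic form is nonnegative.

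The main obstacle, and the heart of the argument, is to extract the pointwise sign of $\partial^2_{33}L$ from the integral inequality $j''(0)\geq0$. The standard device is to choose a family of localized, highly oscillatory variations $h$ concentrated near an arbitrary interior point $t_0$, so that the $(\LD h)^2$ term dominates the other two terms in the integrand. Concretely, I would fix $t_0\in(a,b)$ and pick $h$ supported in a small interval $[t_0-\delta,t_0+\delta]$ with a rapidly oscillating profile of small amplitude but large fractional derivative — for instance scaling a fixed bump so that $\|h\|_\infty$ stays controlled while $\LD h$ is made large in $L^2$. The key technical point is understanding the asymptotics of $\LD h$ for such concentrated variations: since $\LD h(t)=\frac{\r^\a}{\Gamma(1-\a)}\int_a^t(t^\r-\t^\r)^{-\a}h'(\t)\,d\t$, oscillations in $h'$ at frequency $\omega$ produce a fractional derivative of order $\omega^\a$ in magnitude, so $\int(\LD h)^2\,dt$ grows like $\omega^{2\a}$ while $\int h^2\,dt$ and $\int h\,\LD h\,dt$ grow more slowly. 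Dividing the inequality $j''(0)\geq0$ by $\int(\LD h)^2\,dt$ and letting $\omega\to\infty$ should kill the $\partial^2_{22}L$ and cross $\partial^2_{23}L$ contributions, leaving $\partial^2_{33}L(t_0,x(t_0),\LD x(t_0))\geq0$ by the continuity of the second partials and the localization at $t_0$.

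I expect the delicate step to be the precise estimate on $\LD h$ for the oscillating test functions, since the nonlocal kernel couples the behavior on all of $[a,t]$ and one must verify that the localization near $t_0$ genuinely makes the $(\LD h)^2$ term dominant rather than merely comparable. An alternative, cleaner route that avoids sharp asymptotics is to follow the method used for the Riemann--Liouville case in \cite{Lazo}: choose a specific one-parameter family (e.g. $h$ built from $\sin$ of increasing frequency times a cutoff) for which the three integrals can be estimated directly, and argue by contradiction — assuming $\partial^2_{33}L(t_0,\cdot,\cdot)<0$ at some $t_0$ and using continuity to find a neighborhood where it stays negative, then producing a variation making $j''(0)<0$, contradicting minimality. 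Either way, the conclusion follows by the arbitrariness of $t_0\in(a,b)$ and continuity of $\partial^2_{33}L$ up to the endpoints.
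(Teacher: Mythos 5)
Your overall strategy --- second variation nonnegative, then a contradiction argument with test functions concentrated near a point where $\partial^2_{33}L<0$ --- is the right one, and the ``alternative, cleaner route'' you mention at the end is in fact essentially what the paper does. But your primary route, high-frequency oscillatory variations with the claim that $\int(\LD h)^2\,dt$ grows like $\omega^{2\a}$ relative to $\int h^2\,dt$ and $\int h\,\LD h\,dt$, leaves the decisive step unproven: you need a \emph{lower} bound of that order on $\int(\LD h)^2\,dt$, not merely upper bounds on the other two terms, and for the nonlocal Caputo--Katugampola kernel the fractional derivative of an oscillating, cut-off profile is not simply a phase-shifted oscillation of amplitude $\omega^{\a}$ --- the memory integral from the lower limit and from the cutoff contributes non-oscillatory terms whose size must be controlled before you may divide by $\int(\LD h)^2\,dt$ and pass to the limit. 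You flag this yourself as ``the delicate step,'' but it is precisely the mathematical content of the theorem; as written the argument is a plan, not a proof.

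The paper sidesteps the oscillation asymptotics entirely by a shrinking-support scaling. On a subinterval $[c,d]$ where, by continuity, $\partial^2_{22}L<C_1$, $\partial^2_{23}L<C_2$ and $\partial^2_{33}L<C_3<0$, it takes the explicit profile
$$\underline h(t)=(\a+2)(t^\r-c^\r)^{1+\a}-2\tfrac{\a+4}{d^\r-c^\r}(t^\r-c^\r)^{2+\a}+\tfrac{\a+10}{(d^\r-c^\r)^2}(t^\r-c^\r)^{3+\a}-\tfrac{4}{(d^\r-c^\r)^3}(t^\r-c^\r)^{4+\a},$$
extended by zero outside $[c,d]$, for which $\underline h=O\bigl((d^\r-c^\r)^{1+\a}\bigr)$ while ${^CD_{c+}^{\a,\r}}\underline h=O\bigl(d^\r-c^\r\bigr)$ with explicit constants. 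The three terms of the quadratic form then scale as $(d^\r-c^\r)^{2+2\a}$, $(d^\r-c^\r)^{2+\a}$ and $(d^\r-c^\r)^{2}$, and after factoring out $(d^\r-c^\r)^{2}(d-c)$ the remaining bracket tends to $C_3C^2<0$ as $d^\r-c^\r\to0$, giving the contradiction. If you want to complete your write-up, either carry out the oscillation asymptotics for $\LD$ rigorously (including the lower bound on $\int(\LD h)^2\,dt$), or adopt an explicit family of this kind, where both the amplitude of $h$ and of $\LD h$ can be computed and the extra factor $(d^\r-c^\r)^{\a}$ gained on the first two terms does all the work.
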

\begin{proof}
Let $x+\epsilon h$ be a variation of $x$, with $h\in C^1[a,b]$ such that $h(a)=0=h(b)$. If we define
$$j(\epsilon)=J(x+\epsilon h),$$
then we must have $j''(\epsilon)\geq0$, that is
$$\int_a^b\left[\partial^2_{22}L(t,x(t),\LD x(t))h^2(t)+2\partial^2_{23}L(t,x(t),\LD x(t))h(t)\LD h(t)\right.$$
\begin{equation}\label{2nd}\left.+\partial^2_{33}L(t,x(t),\LD x(t))(\LD h(t))^2\right] \,dt\geq0.\end{equation}
Assume that there exists some $t_0 \in[a,b]$ for which
$$\partial^2_{33}L(t_0,x(t_0),\LD x(t_0))<0.$$
Thus, there exists a subinterval $[c,d]\subseteq[a,b]$ and three real constants $C_1,C_2,C_3$ with $C_3<0$ such that
\begin{equation}\label{2nd_C}\partial^2_{22}L(t,x(t),\LD x(t))<C_1, \quad \partial^2_{23}L(t,x(t),\LD x(t))<C_2, \quad \partial^2_{33}L(t,x(t),\LD x(t))<C_3,\end{equation}
for all $t\in[c,d]$.
Define the function $\underline h:[c,d]\to\mathbb R$ by
$$ \underline h(t)=(\a+2)(t^\r-c^\r)^{1+\a}-2\frac{\a+4}{d^\r-c^\r}(t^\r-c^\r)^{2+\a}+\frac{\a+10}{(d^\r-c^\r)^2}(t^\r-c^\r)^{3+\a}
-\frac{4}{(d^\r-c^\r)^3}(t^\r-c^\r)^{4+\a}.$$
Then, $\underline h$ is of class $C^1$, $\underline h(c)=0=\underline h(d)$,  $\underline{h}'(c)=0=\underline h'(d)$ and ${^CD_{c+}^{\a,\r}}\underline h(c)=0={^CD_{c+}^{\a,\r}}\underline h(d)$. Also, we have the following upper bounds:
$$\underline h(t)\leq 28(d^\r-c^\r)^{1+\a} \quad \mbox{and} \quad {^CD_{c+}^{\a,\r}}\underline h(t)\leq C(d^\r-c^\r), \, C=50 \r^\a\Gamma(2+\a),$$
for all $t\in[c,d]$.
Thus, the function $h:[a,b]\to\mathbb R$ defined by
$$h(t)=\left\{\begin{array}{ll}
\underline h(t) & \quad \mbox{ if } \, t\in[c,d]\\
0 & \quad \mbox{otherwise}
\end{array}\right.$$
is of class $C^1$, $h(a)=0=h(b)$ and its fractional derivative
$$\LD h(t)=\left\{\begin{array}{ll}
{^CD_{c+}^{\a,\r}} \underline h(t) & \quad \mbox{ if } \, t\in[c,d]\\
0 & \quad \mbox{otherwise}
\end{array}\right.$$
is continuous. Inserting this variation $h$ into the second order condition \eqref{2nd}, and using relations \eqref{2nd_C}, we obtain
$$\int_a^b\left[\partial^2_{22}L(t,x(t),\LD x(t))h^2(t)+2\partial^2_{23}L(t,x(t),\LD x(t))h(t)\LD h(t)\right.$$
$$\left.+\partial^2_{33}L(t,x(t),\LD x(t))(\LD h(t))^2\right] \,dt$$
$$=\int_c^d\left[\partial^2_{22}L(t,x(t),\LD x(t))h^2(t)+2\partial^2_{23}L(t,x(t),\LD x(t))h(t)\LD h(t)\right.$$
$$\left.+\partial^2_{33}L(t,x(t),\LD x(t))(\LD h(t))^2\right] \,dt$$
$$\leq \int_c^d \left[C_1 28^2(d^\r-c^\r)^{2+2\a}+C_2 56 C(d^\r-c^\r)^{2+\a}+C_3 C^2(d^\r-c^\r)^{2}\right]\,dt$$
$$=(d^\r-c^\r)^{2}(d-c)\left[C_1 28^2(d^\r-c^\r)^{2\a}+C_2 56 C(d^\r-c^\r)^{\a}+C_3 C^2\right],$$
which is negative if $d^\r-c^\r$ is chosen arbitrarily small, and thus we obtain a contradiction.
\end{proof}

\subsection{The isoperimetric problem}\label{sec:iso}

The isoperimetric problem is an old question, and was considered first by the ancient Greeks. It is related to finding a closed plane curve with a fixed perimeter $l$ which encloses the greatest area, that is, if we consider the place curve with parametric equations $(x(t),y(t))$,  $t \in [a,b]$, then we wish to maximize the functional
$$J(x,y)=\frac12\int_a^b (x(t)y'(t)-x'(t)y(t))\,dt,$$
under the boundary restrictions
$$x(a)=x(b) \quad \mbox{and} \quad y(a)=y(b),$$
and the isoperimetric constraint
$$\int_a^b\sqrt{(x')^2(t)+(y')^2(t)}\,dt=l.$$
Only in 1841, a rigorous proof that the solution is a circle was obtained by Steiner.

Nowadays, any variational problem that involves an integral constraint is called an isoperimetric problem. For the following, let $l\in\mathbb R$ be fixed,
$g:[a,b]\times \mathbb R^2\to\mathbb R$ be continuously differentiable with respect to the second and third arguments such that, for any function $x\in C^1[a,b]$, the map $t\mapsto \RD(\partial_3g (t,x(t),\LD x(t)))$ is continuous.
The integral constraint that we will consider is the following:
\begin{equation}\label{isoperi}I(x)=\int_a^b g(t,x(t),\LD x(t))\,dt=l.\end{equation}

\begin{theorem} Let $x$ be a minimizer of the functional $J$ as in \eqref{functional}, defined on the subspace
$$U=\left\{ x\in C^1[a,b]\, : \, x(a)=x_a \quad \mbox{and} \quad x(b)=x_b \right\},$$
subject to the additional restriction \eqref{isoperi}. If $x$ is not an extremal of $I$, then there exists a real $\lambda$ such that, defining the function  $K:[a,b]\times \mathbb R^2\to\mathbb R$ by $K=L+\lambda g$,  $x$ is a solution of the equation
\begin{equation}\label{eq:EL_ISO}\partial_2K(t,x(t),\LD x(t))-\RD \left(\partial_3K(t,x(t),\LD x(t))\right)=0\end{equation}
on $[a,b]$.
\end{theorem}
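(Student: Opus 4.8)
The plan is to follow the classical Lagrange multiplier strategy, adapted to the fractional setting, using a two-parameter family of variations so that the constraint \eqref{isoperi} can be maintained while we vary $x$. First I would exploit the hypothesis that $x$ is not an extremal of $I$. Indeed, if for \emph{every} $h\in C^1[a,b]$ with $h(a)=0=h(b)$ one had
$$\int_a^b\left[\partial_2g(t,x(t),\LD x(t))-\RD\left(\partial_3g(t,x(t),\LD x(t))\right)\right]h(t)\,dt=0,$$
then by the fundamental lemma of the calculus of variations $x$ would satisfy the Euler--Lagrange equation for $I$, contradicting the assumption. Hence there exists a fixed function $h_2\in C^1[a,b]$, with $h_2(a)=0=h_2(b)$, for which this integral is different from zero; call its value $Q_2\neq0$.

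Next I would introduce the two-parameter variation $x+\epsilon_1h_1+\epsilon_2h_2$, where $h_1\in C^1[a,b]$ is arbitrary with $h_1(a)=0=h_1(b)$ and $h_2$ is the function just fixed, and consider the two functions
$$\widehat I(\epsilon_1,\epsilon_2)=I(x+\epsilon_1h_1+\epsilon_2h_2),\qquad \widehat J(\epsilon_1,\epsilon_2)=J(x+\epsilon_1h_1+\epsilon_2h_2).$$
Since $\widehat I(0,0)=l$ and, by Theorem \ref{IntegrationParts}, the partial derivative $\partial\widehat I/\partial\epsilon_2$ at $(0,0)$ equals $Q_2\neq0$, the implicit function theorem yields a $C^1$ map $\epsilon_2=\epsilon_2(\epsilon_1)$, defined near $\epsilon_1=0$ with $\epsilon_2(0)=0$, such that $\widehat I(\epsilon_1,\epsilon_2(\epsilon_1))=l$ identically. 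This guarantees that the admissible curves stay on the constraint set. Because $x$ minimizes $J$ among such curves, the scalar function $j(\epsilon_1)=\widehat J(\epsilon_1,\epsilon_2(\epsilon_1))$ has a minimum at $\epsilon_1=0$, so $j'(0)=0$.

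It then remains to unwind the chain rule. Differentiating the identity $\widehat I(\epsilon_1,\epsilon_2(\epsilon_1))=l$ at $\epsilon_1=0$ gives $\epsilon_2'(0)=-Q_1/Q_2$, where $Q_1$ denotes the analogue of $Q_2$ with $h_1$ in place of $h_2$; differentiating $j$ and using $j'(0)=0$ gives $P_1+P_2\,\epsilon_2'(0)=0$, where $P_1,P_2$ are the corresponding expressions with $L$ in place of $g$. Setting $\lambda=-P_2/Q_2$ and eliminating $\epsilon_2'(0)$ leads to $P_1+\lambda Q_1=0$ for every admissible $h_1$, that is,
$$\int_a^b\left[\partial_2K(t,x(t),\LD x(t))-\RD\left(\partial_3K(t,x(t),\LD x(t))\right)\right]h_1(t)\,dt=0,$$
with $K=L+\lambda g$. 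A final appeal to the fundamental lemma yields \eqref{eq:EL_ISO}. The main obstacle I anticipate is the justification of the implicit function theorem step: one must verify that $\widehat I$ is genuinely $C^1$ in the parameters and that the first variations $P_i,Q_i$ may be computed by differentiating under the integral sign and then integrating by parts via Theorem \ref{IntegrationParts}. The nonvanishing of $Q_2$, which is precisely where the hypothesis that $x$ is not an extremal of $I$ enters, is the crucial ingredient that makes the whole construction possible.
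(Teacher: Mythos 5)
Your proposal is correct and follows essentially the same route as the paper: a two-parameter variation $x+\epsilon_1h_1+\epsilon_2h_2$, the implicit function theorem applied to the constraint functional (with the hypothesis that $x$ is not an extremal of $I$ supplying the nonvanishing partial derivative $Q_2$), and then a multiplier argument concluded by the fundamental lemma. The only cosmetic difference is that you eliminate $\epsilon_2'(0)$ by hand and exhibit $\lambda=-P_2/Q_2$ explicitly, whereas the paper invokes the abstract Lagrange Multiplier Rule for the pair $(j,i)$ on $\mathbb{R}^2$; the two are equivalent, and your version makes it transparent that $\lambda$ depends only on the fixed $h_2$ and not on $h_1$.
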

\begin{proof}
Consider a variation  of $x$ with two parameters $x+\epsilon_1 h_1+\epsilon_2 h_2$, with $|\epsilon_i| \ll 1$ and $h_i\in C^1[a,b]$ satisfying $h_i(a)=0=h_i(b)$, for $i=1,2$. Define the functions $i$ and $j$ with two parameters $(\epsilon_1,\epsilon_2)$ in a neighborhood of zero as
$$i(\epsilon_1,\epsilon_2)=I(x+\epsilon_1 h_1+\epsilon_2 h_2)-l,$$
and
$$j(\epsilon_1,\epsilon_2)=L(x+\epsilon_1 h_1+\epsilon_2 h_2).$$
Using Theorem \ref{IntegrationParts}, we get
$$\frac{\partial i}{\partial \epsilon_2}(0,0)=\int_a^b\left[\partial_2g(t,x(t),\LD x(t))-\RD \left(\partial_3g(t,x(t),\LD x(t))\right)\right] h_2(t)\,dt$$
$$+\left[h_2(t) \RI \left(\partial_3g(t,x(t),\LD x(t))\right)\right]_{t=a}^{t=b}=0.$$
Since $h_2(a)=0=h_2(b)$ and $x$ is not an extremal of $I$, there exists some function $h_2$ such that
$$\frac{\partial i}{\partial \epsilon_2}(0,0)\not=0.$$
Thus, by the Implicit Function Theorem, there exists an unique function $\epsilon_2(\cdot)$ defined in a neighborhood of zero such that $i(\epsilon_1,\epsilon_2(\epsilon_1))=0$, that is, there exists a subfamily of variations that satisfy the isoperimetric constraint \eqref{isoperi}.

On the other hand, $(0,0)$ is a minimizer of $j$, under the restriction $i(\cdot, \cdot)=0$, and we just proved that $\nabla i(0,0)\not=0$. Appealing to the Lagrange Multiplier Rule, there exists a real $\lambda$ such that $\nabla(j+\lambda i)(0,0)=0$. Differentiating the map
$$\epsilon_1\mapsto j(\epsilon_1,\epsilon_2)+\lambda i(\epsilon_1,\epsilon_2),$$
and putting $(\epsilon_1,\epsilon_2)=(0,0)$, we get
$$\int_a^b\left[\partial_2K(t,x(t),\LD x(t))-\RD \left(\partial_3K(t,x(t),\LD x(t))\right)\right] h_1(t)\,dt$$
$$+\left[h_1(t) \RI \left(\partial_3K(t,x(t),\LD x(t))\right)\right]_{t=a}^{t=b}=0.$$
Using the boundary conditions $h_1(a)=0=h_1(b)$, we prove the desired.
\end{proof}

\begin{remark} We can include the case where $x$ is an extremal of $I$. In this case, we apply the general form of the Lagrange Multiplier Rule, that is, there exist two reals $\lambda_0$ and $\lambda$, not both zeros, such that if we define the function  $K_0:[a,b]\times \mathbb R^2\to\mathbb R$ by $K_0=\lambda_0L+\lambda g$,  $x$ is a solution of the equation
$$\partial_2K_0(t,x(t),\LD x(t))-\RD \left(\partial_3K_0(t,x(t),\LD x(t))\right)=0$$
on $[a,b]$.
\end{remark}

\begin{theorem}\label{teo:suffi_ISO4} Suppose that the functions $L$ and $g$ as in \eqref{functional} and \eqref{isoperi} are convex in $[a,b]\times\mathbb R^2$, and let $\lambda\geq0$ be a real. Define the function $K=L+\lambda g$. Then, each solution $x$ of the fractional Euler--Lagrange equation \eqref{eq:EL_ISO}
minimizes $J$ in $U$, subject to the integral constraint \eqref{isoperi}.
\end{theorem}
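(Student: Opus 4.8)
The plan is to reduce this constrained statement to the unconstrained sufficiency result, Theorem \ref{Teo:suffi}, applied to the combined Lagrangian $K=L+\lambda g$. The first step is to observe that $K$ inherits convexity on $[a,b]\times\mathbb R^2$. Indeed, adding the convexity inequality for $L$ to $\lambda$ times the convexity inequality for $g$, one obtains
$$K(t,x+x_1,y+y_1)-K(t,x,y)\geq\partial_2 K(t,x,y)\,x_1+\partial_3 K(t,x,y)\,y_1$$
for all $(t,x,y),(t,x+x_1,y+y_1)\in[a,b]\times\mathbb R^2$. This is precisely where the hypothesis $\lambda\geq0$ enters: multiplying the inequality for $g$ by a nonnegative constant preserves its direction, so $K$ is convex.

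Since $x$ is assumed to solve the Euler--Lagrange equation \eqref{eq:EL_ISO} associated with $K$, I would next apply Theorem \ref{Teo:suffi} verbatim, with $K$ playing the role of the Lagrangian. This yields that $x$ is a local minimizer over all of $U$ of the auxiliary functional
$$\widetilde J(z)=\int_a^b K(t,z(t),\LD z(t))\,dt=J(z)+\lambda I(z),$$
the last equality following from linearity of the integral. In particular, for every variation $x^*=x+\epsilon h\in U$ with $h(a)=0=h(b)$ we have $\widetilde J(x)\leq\widetilde J(x^*)$.

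To conclude, I would restrict attention to those competitors $x^*\in U$ that additionally satisfy the integral constraint \eqref{isoperi}, that is $I(x^*)=l$; since $x$ is itself admissible, we also have $I(x)=l$. Substituting the decomposition $\widetilde J=J+\lambda I$ into $\widetilde J(x)\leq\widetilde J(x^*)$ gives $J(x)+\lambda l\leq J(x^*)+\lambda l$, and cancelling the common term $\lambda l$ yields $J(x)\leq J(x^*)$, so $x$ minimizes $J$ subject to the constraint. The argument is a short bookkeeping consequence of Theorem \ref{Teo:suffi}, so I do not anticipate a genuine obstacle; the only delicate points are the use of $\lambda\geq0$ to keep $K$ convex, and the observation that the terms $\lambda I$ cancel exactly because both $x$ and its admissible competitors lie on the constraint surface $I=l$.
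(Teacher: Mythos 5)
Your proposal is correct and follows essentially the same route as the paper: establish convexity of $K=L+\lambda g$ (using $\lambda\geq 0$), invoke Theorem \ref{Teo:suffi} to get that $x$ minimizes $J+\lambda I$, and then cancel the constant $\lambda l$ contributed by the constraint on both sides. If anything, you are slightly more careful than the paper in stating explicitly where $\lambda\geq0$ is used and that the cancellation only applies to competitors lying on the constraint surface $I=l$.
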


\begin{proof} First, observe that the function $K$ is convex. So, by Theorem \ref{Teo:suffi}, we conclude that $x$ minimizes $K$, that is, for all variations $x+\epsilon h$, we have
$$\int_a^b L(t,x(t)+\epsilon h(t),\LD x(t)+\epsilon \LD h(t))\, dt+\int_a^b \lambda g(t,x(t)+\epsilon h(t),\LD x(t)+\epsilon \LD h(h)) \, dt$$
$$\geq \int_a^b L(t,x(t),\LD x(t))\, dt+\int_a^b \lambda g(t,x(t),\LD x(t)) \, dt.$$
Using the integral constraint, we obtain
$$\int_a^b L(t,x(t)+\epsilon h(t),\LD x(t)+\epsilon \LD h(t))\, dt+l\geq \int_a^b L(t,x(t),\LD x(t))\, dt+l,$$
and so
$$\int_a^b L(t,x(t)+\epsilon h(t),\LD x(t)+\epsilon \LD h(t))\, dt\geq \int_a^b L(t,x(t),\LD x(t))\, dt,$$
proving the desired.
\end{proof}

\begin{remark} Theorem \ref{teo:suffi_ISO4} remains valid under the assumptions that  $L$ is convex in $[a,b]\times\mathbb R^2$, $\lambda\leq0$ and $g$ is concave in $[a,b]\times\mathbb R^2$, that is,
$$g(t,x+x_1,y+y_1)-g(t,x,y)\leq\partial_2 g(t,x,y)x_1+\partial_3 g(t,x,y)y_1$$
holds for all $(t,x,y),(t,x+x_1,y+y_1)\in [a,b]\times\mathbb R^2$.
\end{remark}

\subsection{Holonomic constraints}\label{sec:holo}

Consider the functional $J$ defined by
\begin{equation}\label{funct2}J(x_1,x_2)=\int_a^b L(t,x_1(t),x_2(t),\LD x_1(t),\LD x_2(t))\,dt,\end{equation}
on the space
$$U=\left\{ (x_1,x_2)\in C^1[a,b]\times C^1[a,b]\, : \, (x_1(a),x_2(a))=x_a \quad \mbox{and} \quad (x_1(b),x_2(b))=x_b \right\},$$
with $x_a,x_b\in\mathbb R^2$ fixed. We are assuming that the Lagrangian verifies the two following conditions
\begin{enumerate}
\item $L:[a,b]\times \mathbb R^4\to\mathbb R$ is continuously differentiable with respect to its $i$th argument, for $i=2,3,4,5$;
\item given any function $x$, the maps $t\mapsto \RD(\partial_iL (t,x(t),\LD x(t)))$ are continuous, $i=4,5$.
\end{enumerate}
We consider in this new variational problem an extra constraint (called in the literature as holomonic constraint). Assume that the admissible functions lie on the surface
\begin{equation}\label{rest2}g(t,x_1(t),x_2(t))=0,\end{equation}
where $g:[a,b]\times \mathbb R^2\to\mathbb R$ is continuously differentiable with respect to the second and third arguments. For simplicity, we denote
$$x=(x_1,x_2) \quad \mbox{and} \quad \LD x=(\LD x_1,\LD x_2).$$

\begin{theorem}  Let $x\in U$ be a minimizer of $J$ as in \eqref{funct2}, under the restriction \eqref{rest2}. If
$$\partial g_3(t,x(t))\not=0 \quad \forall t\in[a,b],$$
then there exists a continuous function $\lambda:[a,b]\to\mathbb R$ such that $x$ is a solution of the two next equations
\begin{equation}\label{eq:EL:holomo}\partial_2L(t,x(t),\LD x(t))-\RD \left(\partial_4L(t,x(t),\LD x(t))\right)+\lambda(t) \partial _2 g(t,x(t))=0\end{equation}
and
$$\partial_3L(t,x(t),\LD x(t))-\RD \left(\partial_5L(t,x(t),\LD x(t))\right)+\lambda(t) \partial _3 g(t,x(t))=0$$
on $[a,b]$.
\end{theorem}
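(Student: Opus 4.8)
The plan is to use the Lagrange multiplier method adapted to holonomic constraints, where the multiplier is a \emph{function} $\lambda(t)$ rather than a constant. First I would consider a one-parameter family of admissible variations $x+\epsilon h$, with $h=(h_1,h_2)\in C^1[a,b]\times C^1[a,b]$ and $h_i(a)=0=h_i(b)$ for $i=1,2$, where the family is required to respect the constraint \eqref{rest2}, i.e. $g(t,x_1(t)+\epsilon h_1(t),x_2(t)+\epsilon h_2(t))=0$ for all $t\in[a,b]$ and small $\epsilon$. Differentiating this identity in $\epsilon$ at $\epsilon=0$ yields the linearized constraint $\partial_2 g(t,x(t))\,h_1(t)+\partial_3 g(t,x(t))\,h_2(t)=0$. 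Since $\partial_3 g(t,x(t))\neq0$ on $[a,b]$ by hypothesis, I can solve $h_2(t)=-\frac{\partial_2 g(t,x(t))}{\partial_3 g(t,x(t))}\,h_1(t)$; thus $h_1$ may be chosen freely (subject to the endpoint conditions) while $h_2$ is then determined.

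Next I would compute the first variation $j'(0)=0$ of $\epsilon\mapsto J(x+\epsilon h)$. Applying the integration by parts formula of Theorem \ref{IntegrationParts} to the terms involving $\LD h_1$ and $\LD h_2$, and using that the boundary terms vanish because $h_i(a)=0=h_i(b)$, I obtain
$$\int_a^b \Big[\big(\partial_2 L - \RD(\partial_4 L)\big) h_1(t) + \big(\partial_3 L - \RD(\partial_5 L)\big) h_2(t)\Big]\, dt = 0,$$
where all partials are evaluated at $(t,x(t),\LD x(t))$. The key device is now to define the multiplier so that the second Euler--Lagrange equation holds by construction:
$$\lambda(t) := -\frac{\partial_3 L(t,x(t),\LD x(t)) - \RD(\partial_5 L(t,x(t),\LD x(t)))}{\partial_3 g(t,x(t))}.$$
This function is continuous on $[a,b]$: the numerator is continuous by the standing assumptions on $L$ (in particular continuity of $t\mapsto\RD(\partial_5 L)$) and the denominator is continuous and nowhere zero. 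With this choice the second displayed equation of the statement holds identically.

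Finally I would substitute $h_2=-\frac{\partial_2 g}{\partial_3 g}h_1$ into the first-variation identity and use that, by definition, $(\partial_3 L - \RD(\partial_5 L))/\partial_3 g=-\lambda(t)$, to rewrite the integrand in terms of $h_1$ alone, giving
$$\int_a^b \Big[\partial_2 L - \RD(\partial_4 L) + \lambda(t)\,\partial_2 g\Big] h_1(t)\, dt = 0.$$
Since $h_1$ is an arbitrary $C^1$ function vanishing at the endpoints, the fundamental lemma of the calculus of variations yields the first equation \eqref{eq:EL:holomo}, completing the argument.

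I expect the main obstacle to be the rigorous construction of the admissible variations: a purely \emph{linear} family $x+\epsilon h$ cannot satisfy the nonlinear constraint \eqref{rest2} exactly unless $g$ is affine, so one must verify that a genuine family of curves lying on the surface $g=0$ exists whose derivative at $\epsilon=0$ equals $h$. This is precisely where $\partial_3 g\neq0$ is used through the implicit function theorem, which produces, for each freely prescribed $h_1$ and each small $\epsilon$, a correction of the second coordinate keeping the varied curve on the surface and agreeing to first order with $h_2=-\frac{\partial_2 g}{\partial_3 g}h_1$. Once this admissibility (and the $C^1$-regularity of the resulting second-coordinate variation) is secured, the reduction to a single free variation $h_1$ together with the construction of $\lambda$ makes the remainder routine.
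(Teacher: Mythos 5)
Your proposal is correct and follows essentially the same route as the paper: the same linearized constraint obtained by differentiating $g=0$ in $\epsilon$, the same definition of $\lambda(t)$ from the second Euler--Lagrange expression divided by $\partial_3 g$, and the same substitution of $h_2$ in terms of $h_1$ before applying the fundamental lemma. The admissibility issue you flag at the end is precisely what the paper resolves by invoking the Implicit Function Theorem to produce the $\epsilon$-dependent second-coordinate correction.
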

\begin{proof}
Consider a variation  of $x$ of type $x+\epsilon h$, with $|\epsilon| \ll 1$, and $h\in C^1[a,b]\times C^1[a,b]$ satisfying the boundary conditions $h(a)=(0,0)=h(b)$.
Since
$$\partial g_3(t,x(t))\not=0, \quad \forall t \in[a,b],$$
by the Implicit Function Theorem, there exists a subfamily of variations that satisfy the restriction \eqref{rest2}, that is, there exists an unique function $h_2(\epsilon,h_1)$ such that $(x_1(t)+\epsilon h_1(t),x_2(t)+\epsilon h_2(t))$ satisfies \eqref{rest2}.
So, since for all $t\in[a,b]$, we have
$$g(t,x_1(t)+\epsilon h_1(t),x_2(t)+\epsilon h_2(t))=0,$$
differentiating with respect to $\epsilon$ and putting $\epsilon=0$, we get
\begin{equation}\label{aux2} \partial g_2(t,x(t))h_1(t)+\partial g_3(t,x(t))h_2(t)=0 .\end{equation}
Define the function
\begin{equation}\label{aux3}\lambda(t)=-\frac{\partial_3L(t,x(t),\LD x(t))-\RD \left(\partial_5L(t,x(t),\LD x(t))\right)}{\partial g_3(t,x(t))}.\end{equation}
Using equations \eqref{aux2} and \eqref{aux3}, we obtain
\begin{equation}\label{aux4}\lambda(t)\partial g_2(t,x(t))h_1(t)=\left[\partial_3L(t,x(t),\LD x(t))-\RD \left(\partial_5L(t,x(t),\LD x(t))\right)\right]h_2(t).\end{equation}
On the other hand, since $x$ is a minimizer of $J$, the first variation of $J$ must vanish:
$$\int_a^b \partial_2L(t,x(t),\LD x(t))h_1(t)+\partial_3L(t,x(t),\LD x(t))h_2(t)$$
$$+\partial_4L(t,x(t),\LD x(t))\LD h_1(t)+\partial_5L(t,x(t),\LD x(t))\LD h_2(t)\,dt=0.$$
Integrating by parts, we obtain
$$\int_a^b\left[\partial_2L(t,x(t),\LD x(t))-\RD \left(\partial_4L(t,x(t),\LD x(t))\right)\right] h_1(t)$$
$$+\left[\partial_3L(t,x(t),\LD x(t))-\RD \left(\partial_5L(t,x(t),\LD x(t))\right)\right] h_2(t)\,dt=0.$$
Inserting Eq. \eqref{aux4} into the integral, we get
$$\int_a^b\left[\partial_2L(t,x(t),\LD x(t))-\RD \left(\partial_4L(t,x(t),\LD x(t))\right)+\lambda(t)\partial g_2(t,x(t))\right] h_1(t)\,dt=0,$$
and since $h_1$ is arbitrary, we have that for all $t\in[a,b]$, $x$ is a solution of the equation
$$\partial_2L(t,x(t),\LD x(t))-\RD \left(\partial_4L(t,x(t),\LD x(t))\right)+\lambda(t)\partial g_2(t,x(t))=0.$$
Also, using Eq. \eqref{aux3}, we obtain the second condition
$$\partial_3L(t,x(t),\LD x(t))-\RD \left(\partial_5L(t,x(t),\LD x(t))\right)+\lambda(t)\partial g_3(t,x(t))=0.$$
\end{proof}

\begin{theorem} Suppose that the function $L(\underline t,x_1,x_2,y_1,y_2)$ as in \eqref{funct2} is convex in $[a,b]\times\mathbb R^4$, $g:[a,b]\times \mathbb R^2\to\mathbb R$ is continuously differentiable  with respect to the second and third arguments with
$$\partial g_3(t,x(t))\not=0, \quad \forall t \in[a,b],$$
and let $\lambda$ be given by Eq. \eqref{aux3}. If $x$ is a solution of the fractional Euler--Lagrange equation \eqref{eq:EL:holomo},
then $x$ minimizes $J$ in $U$, subject to the constraint \eqref{rest2}.
\end{theorem}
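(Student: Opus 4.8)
The plan is to mirror the two earlier sufficiency results, Theorem~\ref{Teo:suffi} and Theorem~\ref{teo:suffi_ISO4}: use the convexity of $L$ to bound the increment $J(x+\epsilon h)-J(x)$ from below by an integral against the Euler--Lagrange expressions, integrate by parts to move the fractional derivatives off the variation, and then recognize the surviving term as a multiple of the constraint. Fix a variation $x+\epsilon h$ with $|\epsilon|\ll1$ and $h=(h_1,h_2)\in C^1[a,b]\times C^1[a,b]$ vanishing at the endpoints, chosen from the subfamily that keeps $x+\epsilon h$ on the surface \eqref{rest2}; as in the necessity proof, such $h$ exist by the Implicit Function Theorem and satisfy the relation \eqref{aux2}. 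Convexity of $L$ on $[a,b]\times\mathbb{R}^4$ then gives
$$J(x+\epsilon h)-J(x)\geq \epsilon\int_a^b\left[\partial_2L\,h_1+\partial_3L\,h_2+\partial_4L\,\LD h_1+\partial_5L\,\LD h_2\right]dt,$$
where every partial derivative of $L$ is evaluated at $(t,x(t),\LD x(t))$.

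Next I would apply Theorem~\ref{IntegrationParts} to the two terms carrying $\LD h_1$ and $\LD h_2$. Since $h_1$ and $h_2$ vanish at $a$ and $b$, the boundary contributions $[h_1\,\RI(\partial_4L)]_a^b$ and $[h_2\,\RI(\partial_5L)]_a^b$ drop out, so the estimate reduces to
$$J(x+\epsilon h)-J(x)\geq \epsilon\int_a^b\left\{\left[\partial_2L-\RD(\partial_4L)\right]h_1+\left[\partial_3L-\RD(\partial_5L)\right]h_2\right\}dt.$$
Now I invoke the Euler--Lagrange system: by \eqref{eq:EL:holomo} the minimizer satisfies $\partial_2L-\RD(\partial_4L)=-\lambda(t)\,\partial g_2(t,x(t))$, while the definition \eqref{aux3} of $\lambda$ is exactly $\partial_3L-\RD(\partial_5L)=-\lambda(t)\,\partial g_3(t,x(t))$. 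Substituting both,
$$J(x+\epsilon h)-J(x)\geq-\epsilon\int_a^b\lambda(t)\left[\partial g_2(t,x(t))\,h_1(t)+\partial g_3(t,x(t))\,h_2(t)\right]dt.$$
The bracketed factor is precisely the left-hand side of \eqref{aux2}, hence identically zero along the constraint-preserving family, so the whole lower bound vanishes and $J(x+\epsilon h)\geq J(x)$.

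The step I expect to be the real obstacle is the exact handling of the holonomic constraint, because \eqref{aux2} is only the linearization of $g(t,x+\epsilon h)=0$. When $g$ is not affine in $(x_1,x_2)$, the true increment $g(t,x+\epsilon h)-g(t,x)$ equals the gradient of $g$ integrated along the segment from $x$ to $x+\epsilon h$, not the gradient evaluated at $x$, so the bracketed term above vanishes only up to $o(\epsilon)$. To make the conclusion rigorous I would keep $h_2=h_2(\epsilon,h_1)$ as the constraint-preserving function supplied by the Implicit Function Theorem, expand $g(t,x+\epsilon h)=0$ to control the error, and conclude that the lower bound cannot be negative for small $\epsilon$, giving a local minimizer; in the affine case \eqref{aux2} holds exactly and the inequality is valid with no remainder. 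The remaining steps — the convexity estimate, the integration by parts, and the substitution of the two Euler--Lagrange relations — are routine and parallel to Theorem~\ref{teo:suffi_ISO4}.
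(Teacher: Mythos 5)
Your proposal is correct and follows essentially the same route as the paper: convexity of $L$, integration by parts with vanishing boundary terms, and then the substitution of \eqref{eq:EL:holomo} together with \eqref{aux3} and the linearized constraint \eqref{aux2} to kill the lower bound (the paper merely performs the two substitutions in the opposite order, eliminating $h_2$ via \eqref{aux2} first and then invoking the Euler--Lagrange equation). Your closing caveat --- that \eqref{aux2} is only the linearization of \eqref{rest2}, so for non-affine $g$ the constraint-preserving $h_2$ depends on $\epsilon$ and the bound vanishes only to first order --- is a genuine subtlety that the paper's own proof silently glosses over, so your extra care there is warranted rather than a detour.
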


\begin{proof} If $x+\epsilon h$ is a variation of $x$, then
$$\begin{array}{ll}
\displaystyle J(x+\epsilon h)-J(x)&\geq \displaystyle\int_a^b \left[ \partial_2 L(t,x(t),\LD x(t)) -\RD ( \partial_4 L(t,x(t),\LD x(t)))\right] \epsilon h_1(t)\\
\displaystyle&\quad+\left[ \partial_3 L(t,x(t),\LD x(t)) -\RD ( \partial_5 L(t,x(t),\LD x(t)))\right] \epsilon h_2(t)\,dt.\end{array}$$
Since the variation functions must satisfy the constraint \eqref{rest2}, we have the following relation
$$h_2(t)=-\frac{\partial g_2(t,x(t))h_1(t)}{\partial g_3(t,x(t))},$$
and using Eq. \eqref{aux3}, we deduce
$$\begin{array}{ll}
\displaystyle J(x+\epsilon h)-J(x)&\geq \displaystyle\int_a^b \left[\partial_2L(t,x(t),\LD x(t))-\RD \left(\partial_4L(t,x(t),\LD x(t))\right)\right.\\
\displaystyle&\quad\left.+\lambda(t) \partial _2 g(t,x(t))\right] \epsilon h_1(t)\, dt,\end{array}$$
which is zero by hypothesis.
\end{proof}

\subsection{The Herglotz problem}\label{sec:herglotz}

The fractional Herglotz problem is described in the following way. Determine a curve $x\in C^1[a,b]$ subject to the boundary conditions $x(a)=x_a$ and $x(b)=x_b$, such that the solution $z$ of the system
\begin{equation}\label{Herglotz}
\left\{\begin{array}{l}
z'(t)=L(t,x(t),\LD x(t),z(t)), \quad  t\in[a,b]\\
z(a)=z_a\\
\end{array}\right.\end{equation}
attains a minimum at $t=b$.
The assumptions are
\begin{enumerate}
\item  given any function $x$,  the map $t\mapsto {^C_aD^\alpha_t}x(t)$  is continuously differentiable;
\item $L:[a,b]\times \mathbb R^3\to\mathbb R$ is continuously differentiable with respect to its $i$th argument, for $i=2,3,4$;
\item given any function $x$, the map $t\mapsto \RD(\lambda(t)\partial_3L (t,x(t),\LD x(t)),z(t))$ is continuous, where
\begin{equation}\label{lambda}\lambda(t)=\exp\left(-\int_a^t \partial _4L(\t,x(\t),\LD x(\t),z(\t))d\tau\right).\end{equation}
\end{enumerate}

We note that, given $x$, system \eqref{Herglotz} becomes an initial value problem
$$\left\{\begin{array}{l}
z'(t)=f(t,z(t)), \quad  t\in[a,b]\\
z(a)=z_a\\
\end{array}\right.$$
and so the solution depends on $t$ and on $x$, that is,  $z=z[t,x]$. If we consider variations of $x$ of type $x+\epsilon h$ into Eq. \eqref{Herglotz}, then the solution also depends on $\epsilon$, and it is differentiable with respect to $\epsilon$ (see Section 2.6 in \cite{Anosov}).

\begin{theorem} Let $x$ be such that $z(b)$ as in Eq. \eqref{Herglotz} attains a minimum. Then, $x$ is a solution of the fractional differential equation
$$\lambda(t)\partial_2L(t,x(t),\LD x(t),z(t))-\RD\left(\lambda(t)\partial_3L(t,x(t),\LD x(t),z(t))\right)=0$$
on $[a,b]$.
\end{theorem}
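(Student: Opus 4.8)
The plan is to adapt the classical Herglotz argument to the fractional setting, the essential new ingredient being the fractional integration by parts formula of Theorem \ref{IntegrationParts}. I would consider a variation $x+\epsilon h$ with $h\in C^1[a,b]$ and $h(a)=0=h(b)$, and denote by $z(\cdot,\epsilon)$ the corresponding solution of the initial value problem \eqref{Herglotz}; by the cited result in \cite{Anosov} this solution is differentiable with respect to $\epsilon$. The key object is the sensitivity function
$$\theta(t) := \frac{\partial z}{\partial \epsilon}(t,\epsilon)\Big|_{\epsilon=0}.$$

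First I would differentiate the defining ODE $z'(t)=L(t,x+\epsilon h,\LD(x+\epsilon h),z)$ with respect to $\epsilon$ and set $\epsilon=0$, which yields the linear first-order equation
$$\theta'(t) = \partial_2 L \cdot h(t) + \partial_3 L \cdot \LD h(t) + \partial_4 L \cdot \theta(t),$$
where the partial derivatives of $L$ are evaluated along the extremal $(t,x(t),\LD x(t),z(t))$. Since the initial value $z(a)=z_a$ does not depend on $\epsilon$, we have $\theta(a)=0$; and since $z(b)$ attains a minimum at $x$, the first variation $\epsilon\mapsto z(b,\epsilon)$ must have a critical point at $\epsilon=0$, giving $\theta(b)=0$.

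Next I would solve this linear ODE using the integrating factor $\lambda$ from \eqref{lambda}, which satisfies $\lambda'(t)=-\partial_4 L\cdot\lambda(t)$ and $\lambda(a)=1$. Multiplying the equation for $\theta$ by $\lambda$ collapses the left-hand side to an exact derivative,
$$(\lambda\theta)'(t) = \lambda(t)\left[\partial_2 L \cdot h(t) + \partial_3 L \cdot \LD h(t)\right],$$
and integrating over $[a,b]$ together with the boundary values $\theta(a)=0=\theta(b)$ (and $\lambda(b)\neq 0$) eliminates the left-hand side entirely, leaving
$$\int_a^b \lambda(t)\left[\partial_2 L(t,x(t),\LD x(t),z(t)) h(t) + \partial_3 L(t,x(t),\LD x(t),z(t)) \LD h(t)\right] dt = 0.$$

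Finally I would apply Theorem \ref{IntegrationParts} to the second term, transferring the fractional derivative off $h$; the boundary term $[h(t)\RI(\lambda\partial_3 L)]_{t=a}^{t=b}$ vanishes because $h(a)=0=h(b)$. This produces
$$\int_a^b \left[\lambda(t)\partial_2 L - \RD\left(\lambda(t)\partial_3 L\right)\right] h(t)\, dt = 0,$$
and the fundamental lemma of the calculus of variations, using the arbitrariness of $h$ on $(a,b)$, delivers the stated Euler--Lagrange equation. The main obstacle I anticipate is the rigorous justification of the second step: establishing differentiability of $z$ in $\epsilon$ and deriving the variational equation for $\theta$ together with the boundary condition $\theta(b)=0$. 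This is where the Herglotz structure — the coupling of the minimized quantity to an ODE — genuinely enters, and everything downstream is a routine combination of the integrating factor and the fractional integration by parts.
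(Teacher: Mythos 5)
Your proposal is correct and follows essentially the same route as the paper: form the sensitivity function $\theta$, derive its linear variational ODE, use the integrating factor $\lambda$ from \eqref{lambda} together with $\theta(a)=\theta(b)=0$ to reduce everything to a single integral, apply Theorem \ref{IntegrationParts} to move $\LD$ off $h$, and invoke the arbitrariness of $h$. The only (immaterial) difference is that you integrate over $[a,b]$ before integrating by parts, whereas the paper writes the solution formula for general $t$ first and then sets $t=b$.
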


\begin{proof} Let $x+\epsilon h$ be a variation of $x$, with $|\epsilon| \ll 1$ and $h\in C^1[a,b]$ such that $h(a)=0=h(b)$. The variation of $z$ is given by
$$\theta (t)=\frac{d}{d\epsilon} \left.z[t,x+\epsilon h]\right|_{\epsilon=0}.$$
Inserting the variations into the differential equation \eqref{Herglotz}, we obtain
$$\frac{d}{dt}z[t,x+\epsilon h]=L(t,x(t)+\epsilon h(t),\LD x(t)+\epsilon\LD h(t),z[t,x+\epsilon h]).$$
Since
$$\begin{array}{ll}
\theta'(t)&=\displaystyle \left.\frac{d}{d\epsilon} \frac{d}{dt}z[t,x+\epsilon h]\right|_{\epsilon=0}\\
&=\displaystyle\frac{d}{d\epsilon }
\left. L(t,x(t)+\epsilon h(t),\LD x(t)+\epsilon\LD h(t),z[t,x+\epsilon h])\right|_{\epsilon=0}\\
&=\displaystyle \partial _2L (t,x(t),\LD x(t),z(t))h(t)+\partial _3L (t,x(t),\LD x(t),z(t))\LD h(t)\\
&\quad\displaystyle+\partial _4L (t,x(t),\LD x(t),z(t))\theta(t),\\
\end{array}$$
we obtain a linear differential equation whose solution is given by
$$\theta(t)\lambda(t)-\theta(a)$$
$$=\int_a^t \lambda(\t)\left[ \partial _2L (\t,x(\t),\LD x(\t),z(\t))h(\t)+\partial _3L (\t,x(\t),\LD x(\t),z(\t))\LD h(t) \right]d\tau,$$
and integrating by parts, we get
\begin{equation}\label{Herglotz2}\theta(t)\lambda(t)-\theta(a)\end{equation}
$$=\int_a^t \left[\lambda(\t) \partial _2L (\t,x(\t),\LD x(\t),z(\t))-\RD\left(\lambda(\t)\partial _3L (\t,x(\t),\LD x(\t),z(\t))\right)\right]h(\t)\,d\tau.$$
Since $z(a)$ is fixed, we get $\theta(a)=0$. Also, since $z(b)$ is minimum, then $\theta (b)=0$. Replacing $t=b$ in Eq. \eqref{Herglotz2}, and by the arbitrariness of $h$ in $(a,b)$, we obtain that
$$\lambda(t)\partial_2L(t,x(t),\LD x(t),z(t))-\RD\left(\lambda(t)\partial_3L(t,x(t),\LD x(t)z(t))\right)=0$$
must hold for all $t\in[a,b]$.
\end{proof}


\section*{Acknowledgments}

I would like to thank Tatiana Odzijewicz,  for a careful and thoughtful reading of the manuscript.
This work was supported by Portuguese funds through the CIDMA - Center for Research and Development in Mathematics and Applications,
and the Portuguese Foundation for Science and Technology (FCT-Funda\c{c}\~ao para a Ci\^encia e a Tecnologia), within project UID/MAT/04106/2013.



\begin{thebibliography}{99}

\bibitem{Almeida1}
R. Almeida, A.B. Malinowska and T. Odzijewicz,
Fractional differential equations with dependence on the Caputo--Katugampola derivative, submitted.

\bibitem{Almeida}
R. Almeida and  D.F.M. Torres,
Calculus of variations with fractional derivatives and fractional integrals,
Appl. Math. Lett. {\bf 22} (2009), 1816--1820.

\bibitem{Anosov}
D.V. Anosov, S.K. Aranson,  V.I. Arnold, I.U. Bronshtein, V.Z. Grines and Y.S. Ilyashenko,
Ordinary Differential Equations and Smooth Dynamical Systems, Encyclopaedia of Mathematical Sciences, Vol. 1, 1988.

\bibitem{Atanackovic}
T.M. Atanackovi\'c, S. Konjik\ and\ S. Pilipovi\'c,
Variational problems with fractional derivatives: Euler-Lagrange equations,
J. Phys. A {\bf 41} (2008), 095201, 12 pp.

\bibitem{Baleanu10}
D. Baleanu,  T. Maaraba, F. Jarad,
Fractional variational principles with delay,
J. Phys. A {\bf 41} (2008), 315403, 8 pp.

\bibitem{MR2411429}
D. Baleanu, S.I. Muslih and E. M. Rabei,
On fractional Euler-Lagrange and Hamilton equations and the fractional generalization of total time derivative,
Nonlinear Dynam. {\bf 53} (2008), 67--74.

\bibitem{MR2563910}
D. Baleanu\ and\ J.I. Trujillo,
A new method of finding the fractional Euler-Lagrange and Hamilton equations within Caputo fractional derivatives,
Commun. Nonlinear Sci. Numer. Simul. {\bf 15} (2010), 1111--1115.

\bibitem{Cresson}
J. Cresson,
Fractional embedding of differential operators and Lagrangian systems,
J. Math. Phys. {\bf 48} (2007) 033504, 34~pp.

\bibitem{El-Nabulsi}
R.A. El-Nabulsi, D.F.M. Torres,
Fractional actionlike variational problems,
J. Math. Phys. {\bf 49} (2008) 053521, 7 pp.

\bibitem{Frederico:Torres10}
G.S.F. Frederico and D.F.M. Torres,
Fractional Noether's theorem in the Riesz-Caputo sense,
Appl. Math. Comput. {\bf 217} (2010), 1023--1033.

\bibitem{Baleanu1}
Y.Y Gambo, F. Jarad, D. Baleanu and T. Abdeljawad,
On Caputo modification of the Hadamard fractional derivatives,
Adv. Difference Equ. (2014), 2014:10  doi:10.1186/1687--1847--2014--10.

\bibitem{Baleanu2}
F. Jarad, T. Abdeljawad and D. Baleanu,
Caputo-type modification of the Hadamard fractional derivatives,
Adv. Difference Equ. (2012), 2012:142  doi:10.1186/1687--1847--2012--142.

\bibitem{Jumarie}
G. Jumarie,
From Lagrangian mechanics fractal in space to space fractal Schrödinger's equation via fractional Taylor's series,
Chaos Solitons Fractals {\bf 41} (2009), 1590--1604.

\bibitem{Jumarie3b}
G. Jumarie,
An approach via fractional analysis to non-linearity induced by coarse-graining in space,
Nonlinear Anal. Real World Appl. {\bf 11} (2010), 535--546.

\bibitem{Katugampola1}
U.N. Katugampola, New approach to a generalized fractional integral,
Appl. Math. Comput. { \bf 218} (2011), 860--865.

\bibitem{Katugampola2}
U.N. Katugampola, New approach to generalized fractional derivatives,
Bull. Math. Anal. App. {\bf 6} (2014) 1--15.

\bibitem{Kilbas}
A.A. Kilbas, H.M. Srivastava and J.J. Trujillo, Theory and Applications of Fractional Differential Equations.
North-Holland Mathematics Studies, 204. Elsevier Science B.V., Amsterdam, 2006.

\bibitem{Lazo}
M.J. Lazo and D.F.M. Torres,
The Legendre condition of the fractional calculus of variations,
Optimization {\bf 63} (2014) 1157--1165.

\bibitem{comBasia:Frac1}
A.B. Malinowska and\ D.F.M. Torres,
Generalized natural boundary conditions for fractional variational problems in terms of the Caputo derivative,
Comput. Math. Appl. {\bf 59} (2010), 3110--3116.

\bibitem{MT}
A.B. Malinowska and D.F.M. Torres,
Multiobjective fractional variational calculus in terms of a combined Caputo derivative,
Appl. Math. Comput. {\bf 218} (2012), 5099--5111.

\bibitem{book:MT}
A.B. Malinowska and D.F.M. Torres,
Introduction to the Fractional Calculus of Variations,
Imp. Coll. Press, London, 2012.

\bibitem{withTatiana:Basia}
T. Odzijewicz, A.B. Malinowska and D.F.M. Torres,
Fractional variational calculus with classical and combined Caputo derivatives,
Nonlinear Anal. {\bf 75} (2012), 1507--1515.

\bibitem{Riewe}
F. Riewe, Nonconservative Lagrangian and Hamiltonian mechanics.
Phys. Rev. E {\bf 53} (1996), 1890--1899.

\end{thebibliography}
\end{document}